\newtheorem{definition}{Definition}[section]
\newtheorem{theorem}[definition]{Theorem}
\newtheorem{lemma}[definition]{Lemma}
\newtheorem{proposition}[definition]{Proposition}
\newtheorem{claim}[definition]{Claim}
\newcommand{\invlim}[2]{\ensuremath{\lim\limits_{\leftarrow}\{#1,#2\}}}
\newcommand{\B}[1]{\ensuremath{\mathbb{#1}}}
\newcommand{\N}{\B{N}}
\newcommand{\Z}{\B{Z}}
\newcommand{\C}[1]{\mathcal{#1}}
\newcommand{\ov}[1]{\ensuremath{\overline{#1}}}
\newcommand{\ilim}[1]{\ensuremath{\lim\limits_{\leftarrow}\{[0,1],
#1\}}}
\newcommand{\n}{\noindent}
\newcommand{\al}{\alpha}
\newcommand{\nin}{\not\in}
\renewcommand{\include}{\input}
\newcommand{\lt}{\operatorname{lt}}
\newcommand{\w}{\omega}
\newcommand{\alp}{\alpha}
\newcommand{\bt}{\beta}
\newcommand{\gm}{\gamma}
\newcommand{\nowt}{\emptyset}
\renewcommand{\include}{\input}
\newcommand{\hi}{\operatorname{Ht}}
\newcommand{\sh}{\sigma}
\newcommand{\aux}{\vartriangleleft}
\renewcommand{\hi}{\operatorname{ht}_{\C T}}
\newcommand{\Hi}{\operatorname{Ht}}
\newcommand{\sartre}{(\,)}
\newcommand{\eps}{\epsilon}
\newcommand{\Fi}{\operatorname{Fi}}
\newcommand{\Q}{\mathbb{Q}}
\newcommand{\lmb}{\lambda}
\begin{document}

\title[Inverse limits of $\w$-limit sets]
{Countable inverse limits of postcritical $\w$-limit sets fo
unimodal maps}

\author[C. Good]{Chris Good}
\address{School of Mathematics and Statistics, University of Birmingham,
Birmingham, B15 2TT, UK} \email{c.good@bham.ac.uk}

\author[R. Knight]{Robin Knight}
\address{Mathematical Institute, University of Oxford,
Oxford, OX1 3LB, UK} \email{knight@maths.ox.ac.uk}

\author[B.E. Raines]{Brian Raines}\thanks{Raines supported by NSF grant 0604958.}
\address{Department of Mathematics, Baylor University, Waco, TX
76798--7328,USA} \email{brian\_raines@baylor.edu}

\subjclass[2000]{37B45, 37E05, 54F15, 54H20} \keywords{attractor,
invariant set, inverse limits, unimodal, continuum, indecomposable}
\maketitle
\begin{abstract}
  Let $f$ be a unimodal map of the interval with critical point $c$.
   If the orbit of $c$ is not dense then most points in $\ilim{f}$
   have neighborhoods that are homeomorphic with the product of a
   Cantor set and an open arc.  The points without this property are
   called \emph{inhomogeneities}, and the set, $\C I$, of
   inhomogeneities is equal to $\invlim{\w(c)}{f|_{\w(c)}}$.  In this
   paper we consider the relationship between the limit complexity
   of $\w(c)$ and the limit complexity of $\C I$.  We show that if
   $\w(c)$ is more complicated than a finite collection of
   convergent sequences then $\C I$ can have arbitrarily high limit
   complexity.  We give a complete description of the limit
   complexity of $\C I$ for any possible $\w(c)$.
\end{abstract}

\section{Introduction}  Let $f$ be a unimodal map of the interval
with critical point $c$.  The structure of the inverse limit space
generated by $f$ has been the subject of much study,
\cite{bargebrucks&diamond}, \cite{barge&diamond}, \cite{barge&martin}.  Recently these spaces have appeared in
the study of certain dynamic models from macroeconomics,
\cite{mediorainesecon}, \cite{mediorainesmath},
\cite{kennedystockmanyorke}, \cite{kennedystockman}.
 A driving problem in the study of such spaces is Ingram's
Conjecture. It states that if $f$ and $g$ are two tent maps with
$f\neq g$, then $\ilim{f}$ is not homeomorphic to $\ilim{g}$.
Ingram's conjecture has been proven in the case that $c$ is
periodic, \cite{kailhofer},
\cite{blockjagkeeskail}, or preperiodic, \cite{bruin},
\cite{stimac:preperiod}.  It has recently been proven in the case that $c$ is
non-recurrent, \cite{raines-stimac}.  One of the distinguishing
features among these various cases is the topological structure of
the postcritical omega-limit set, $\w(c)$.

If the orbit of $c$ is not dense in $[0,1]$ then most points in
$\ilim{f}$ have a neighborhood homeomorphic to the product of a
Cantor set and an open arc.  The exceptional points without this
property we call \emph{inhomogeneities} and denote the collection of
such points by $\C I$.  It is known that $\C
I=\invlim{\w(c)}{f|_{\w(c)}}$, \cite{raines:nonhyp}, and hence it is
a compact subset of $\ilim{f}$.  If $c$ has a dense orbit then
$\w(c)=[0,1]$ and so $\C I=\ilim{f}$.  But it can also be the case
that $\C I$ is finite, countably infinite, or uncountable (in which
case it contains a Cantor set.)

In this paper we consider the case that $\C I$ is countably
infinite.  Then $\C I$ and $\w(c)$ are countably infinite compact
sets, and such sets can be topologically identified by their
\emph{limit complexity} or \emph{limit height}.  Roughly speaking,
we say that a single isolated point has limit height zero.  Then a
point which is a limit of a sequence of isolated points (such as $0$
in the set $\{0\}\cup\{1/n\}_{n\in \N}$) is said to have limit type
$1$, and we say that a point which is a limit of points of type $1$
is of limit type $2$.  To see an example of a point of limit type
$2$, for each $n\in \N$ let $S_n=\{1/n\}\cup \{1/n-1/j\}_{j>n^2}$
and $S=\{0\}\cup \bigcup_{n\in \N}S_n$.  Then $0\in S$ has limit
height $2$.  We can inductively define limit height $n$ for any
$n\in \N$. We can extend the notion to all countable ordinals,
$\gamma$, by defining a point to have limit height $\gamma$ provided
every neighborhood of the point contains points with limit type
$\lambda$ for all $\lambda<\gamma$.

In \cite{goodknightraines}, we prove that the limit height of a
countably infinite $\w$-limit set cannot be a limit ordinal.  Then
for each allowed countable limit height, $\alpha+1$, we construct a
tent map with postcritical $\w$-limit set having limit type
$\alpha+1$ and $\C I$ also having limit type $\alpha+1$.  As a
result of our particular construction techniques in that paper, the
limit type of $\C I$ and $\w(c)$ is identical in our examples.

The question we address in this paper is: Given a countable
postcritical omega-limit set with $n$ points of highest limit type,
$\alpha+1$, that has a countable inverse limit $\C I$ with $m$
points of highest limit type $\beta+1$, what are the relationships
between $\alp$ and $\beta$ and between $n$ and $m$. We completely
answer this question, thus providing a complete picture of the
correspondence  between $\w(c)$ and $\C
I=\invlim{\w(c)}{f|_{\w(c)}}$ for l.e.o. unimodal maps when $\C I$
is countable. In particular, we show that if $\alpha$ is $0$ then so
is $\beta$, but if $\alpha\ge 1$ then $\beta$ can be any countable
ordinal greater than or equal to $\alpha$.

\medskip

In Section \ref{prelims} we briefly mention the symbolic dynamics
and kneading theory that we make use of in this paper. Section
\ref{count} formally defines what we mean by the \emph{limit type}
of a point and describes the structure of countable compact metric
spaces. In this section, we also discuss the notion of a
\emph{well-founded tree} from descriptive set-theory that we use to
construct our kneading sequences. Our main theorem is then stated in
Section \ref{main}, whilst the proof is given in Section
\ref{constructions}, where we prove the existence of various
appropriate unimodal functions by via their kneading sequences, and
in Section \label{rectrictions}, where we discuss the various
restrictions on the correspondence between $\w(c)$ and $\C I$. We
conclude in the final section, by showing that $\C I$ can be
uncountable, even though $\w(c)$ is countable.

\section{Preliminaries}\label{prelims}

We encourage the reader unfamiliar with techniques from the theory
of inverse limit spaces to see \cite{ingram5} or
\cite{ingram&mahavier}.

Let $f:[0,1]\to[0,1]$ be continuous.  We define the \emph{inverse
limit} of $f$ to be the space $$\ilim{f}:=\{x=(x_0, x_1,\dots)\in
[0,1]^\N:f(x_i)=x_{i-1}\}$$  For each $n\ge 0$ we define
$\pi_n:\ilim{f}\to [0,1]$ to be the $n$th projection map, i.e.
$\pi_n(x_0,x_1,\dots)=x_n$.  The standard metric on $\ilim{f}$ we
denote by $d$ and it is defined as
$$d[x,y]=\sum_{i=0}^\infty\frac{|x_i-y_i|}{2^{i+1}}$$

%

We say a map, $f$, is \emph{locally eventually onto, l.e.o.,}
provided for every $\eps>0$ and for all $x\in [0,1]$ there is an
integer $N$ so that $f^N[B_\eps(x)]=[0,1]$. Let $h$ be a unimodal
map of the unit interval with critical point $c_h$.  We have shown
in \cite{rainesthesis} \& \cite{raines:nonhyp} that if $h$ is l.e.o.
and $x\in \ilim h$ such that $x$ does not have a neighborhood
homeomorphic to the product of a Cantor set and an open arc, then
every coordinate of $x$ is an element of the \emph{omega-limit set}
of $c_h$, $\w(c_h)=\bigcap_{n\in \N}\ov{\{h^m(c_h):m\ge n\}}$.  Let
$\C I$ denote the set of such points i.e. $\C
I=\invlim{\w(c)}{f|_{\w(c)}}$.

\begin{lemma} \label{omega cantor implies fold cantor}
 Let $h$ be the core of a tent map with critical point $c_h$ and
$\w(c_h)$ a Cantor set.  Then $\C I$ is also a Cantor set.
\end{lemma}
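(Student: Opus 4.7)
\medskip

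\noindent\textit{Proof plan.} The plan is to verify the four conditions in the classical topological characterization of the Cantor set for $\C I = \invlim{\w(c_h)}{h|_{\w(c_h)}}$: namely that it is nonempty, compact metrizable, totally disconnected and perfect.

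Three of these will be essentially immediate. First, $\C I$ is a closed subspace of the compact metrizable product $\w(c_h)^{\N}$, hence is itself compact and metrizable. Second, to see that $\C I$ is nonempty I first record the standard fact that $h(\w(c_h)) = \w(c_h)$: the inclusion $h(\w(c_h)) \subseteq \w(c_h)$ is immediate, and the reverse follows from compactness by lifting any $y = \lim h^{n_k}(c_h)$ to a subsequential limit of the points $h^{n_k-1}(c_h)$, which again lies in $\w(c_h)$. Thus the bonding map is a continuous surjection of a nonempty compact space, so its inverse limit is nonempty. Third, $\C I$ is totally disconnected, because $\w(c_h)$ is (being a Cantor set) and total disconnectedness passes to arbitrary products and subspaces.

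The heart of the argument is perfectness of $\C I$. Let $x = (x_0, x_1, \dots) \in \C I$, and let $V$ be any basic open neighborhood of $x$. Using the product topology together with the compatibility $h(y_i) = y_{i-1}$, I may replace $V$ by a neighborhood of the form $V = \pi_N^{-1}(U)$ for some $N$ and some open $U \ni x_N$. Because $\w(c_h)$ is perfect, I can choose $y_N \in U \cap \w(c_h)$ with $y_N \neq x_N$. For $0 \le i < N$, set $y_i = h^{N-i}(y_N)$; these all lie in $\w(c_h)$ by forward invariance and satisfy the compatibility. For $i > N$, using the surjectivity of $h|_{\w(c_h)}$ just established, I inductively pick $y_i \in h^{-1}(y_{i-1}) \cap \w(c_h)$. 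The resulting sequence $y$ belongs to $\C I \cap V$ but differs from $x$ at coordinate $N$, exhibiting $x$ as a non-isolated point of $\C I$.

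No individual step of this plan looks technically demanding; the main conceptual point, and really the only place the argument could go wrong, is the backward extension in the perfectness step. It is precisely the freedom to choose preimages within the strongly invariant perfect set $\w(c_h)$ that converts a nearby point $y_N$ into a genuinely distinct element of the inverse limit, and this freedom is what rules out isolated points in $\C I$.
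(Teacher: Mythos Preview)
Your proposal is correct and follows essentially the same approach as the paper: the core step in both is the perfectness argument, where one picks a nearby but distinct point $y_N\neq x_N$ in $\w(c_h)$ using perfectness, and then extends it backward to a point of $\C I$ using the surjectivity $h(\w(c_h))=\w(c_h)$. The paper's proof is terser---it treats compactness, metrizability, nonemptiness and total disconnectedness as obvious and only spells out the perfectness step---whereas you verify all four conditions explicitly, but the substance is the same.
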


\begin{proof}  A point $ x$ is in $\C I$
if and only if $\pi_n(x)$ is an element of $\w(c_h)$ for every $n\in
\N$.  Let $x\in \C I$.  Let $\eps>0$ and choose $\delta>0$ and $N\in
\N$ so that if $ z\in \ilim h$ and $\pi_N(z)$ is within $\delta$ of
$\pi_N(x)$ then $d[z,  x]<\eps$. Since $\w(c_h)$ is a Cantor set
there is a point, $z_N\in \w(c_h)$ such that $|z_N-\pi_N(
x)|<\delta$. Since $h[\w(c_h)]=\w(c_h)$ it is easy to see that we
can construct a point $ z\in \C I$ with $\pi_N( z)=z_N$. Hence $ x$
is not isolated in $\C I$ and $\C I$ is a Cantor set.
\end{proof}

Let $i_h:[0,1]\to \{0,1,*\}$ be defined by

$$i_h(x)\begin{cases} 0&\text{if $x<c_h$}\\[3pt]
1&\text{if $x>c_h$}\\[3pt]
*&\text{if $x=c_h$.}\\
\end{cases}$$

\n For each point $x\in [0,1]$ define the \emph{itinerary} of $x$ by

$$I_h(x)=(i_h(x), i_h\circ h(x), i_h\circ h^2(x),\dots)$$

\n and, given an integer $M$, let the \emph{cylinder of diameter $M$
centered on $x$} be given by:

$$I_h(x)|_{M}=(i_h(x), i_h\circ h(x), i_h\circ h^2(x), \dots,
i_h\circ h^M(x)).$$

\n The \emph{kneading sequence} for $h$ is defined to be
$K_h=I_h[h(c_h)]$.

The following results are well-known and easy to prove,
\cite{colletandeckmann}.  Let $P_h\subseteq [0,1]$ be the collection
of precritical points for $h$, i.e. the collection of points that
have a $*$ in their itinerary.

\begin{lemma}  \label{epsfirst} Let $h:[0,1]\to [0,1]$ be unimodal and l.e.o. and $\eps>0$.
Then there is an integer $N$ such that if $x,y\in [0,1]\setminus
P_h$ and $I_h(x)|_{N}=I_h(y)|_N$  then $|x-y|<\eps$.
\end{lemma}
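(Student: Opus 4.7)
The plan is to argue that cylinder sets defined by itineraries of length $N+1$ have uniformly small diameter once $N$ is large. I would carry this out in two steps.

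Step one (cylinders are intervals mapped monotonely). Suppose $x, y \in [0,1]\setminus P_h$ with $x < y$ share an itinerary of length $N+1$, say $(a_0, \dots, a_N)$. By induction on $i$ I would show that, for each $i \le N$, the whole closed interval $[x,y]$ is mapped by $h^i$ into the component of $[0,1]\setminus\{c_h\}$ selected by $a_i$. The base case is immediate from $x, y \notin P_h$: both endpoints lie strictly on the side of $c_h$ determined by $a_0$, so $[x,y]$ does too. For the inductive step, $h$ is monotone on the set $h^i([x,y])$ since that set lies on one side of $c_h$, so $h^{i+1}([x,y])$ is the interval between $h^{i+1}(x)$ and $h^{i+1}(y)$; and both endpoints have itinerary symbol $a_{i+1}$, hence lie on a common side of $c_h$. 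In particular $h^N([x,y])$ is contained in $[0,c_h)$ or in $(c_h,1]$, and so is a proper subset of $[0,1]$.

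Step two (uniform l.e.o.\ via compactness). For each $\delta > 0$ I would produce an integer $M(\delta)$ such that $h^{M(\delta)}(B_\delta(y)) = [0,1]$ for every $y \in [0,1]$. Cover $[0,1]$ with finitely many balls $B_{\delta/2}(x_1), \dots, B_{\delta/2}(x_K)$; l.e.o.\ supplies integers $N_j$ with $h^{N_j}(B_{\delta/2}(x_j))=[0,1]$, and I take $M = \max_j N_j$. Given $y \in [0,1]$, pick $j$ with $y \in B_{\delta/2}(x_j)$; the triangle inequality gives $B_{\delta/2}(x_j) \subseteq B_\delta(y)$, so $h^{N_j}(B_\delta(y)) = [0,1]$, and surjectivity of $h$ (which l.e.o.\ forces, since otherwise no iterate could be onto) extends this to all iterates at least $N_j$, in particular to $M$.

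Putting the pieces together, set $N := M(\eps/2)$. If $x, y \in [0,1]\setminus P_h$ had matching itineraries of length $N+1$ but $|x-y| \ge \eps$, then the interval $[x,y]$ would contain the ball of radius $\eps/2$ around its midpoint, forcing $h^N([x,y]) = [0,1]$ by step two, while simultaneously $h^N([x,y]) \ne [0,1]$ by step one. The main obstacle is step two: the definition of l.e.o.\ supplies only a pointwise exponent $N_x$, and one has to combine the compactness of $[0,1]$ with surjectivity of $h$ to obtain a single uniform $M(\delta)$.
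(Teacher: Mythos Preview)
Your argument is correct: the induction in step one cleanly shows that a common length-$(N+1)$ itinerary forces $h^N([x,y])$ to miss $c_h$, and the compactness-plus-surjectivity argument in step two upgrades the pointwise l.e.o.\ hypothesis to a uniform one. The paper does not actually prove this lemma---it simply records it as well-known and refers to Collet and Eckmann---so there is no in-paper proof to compare against; your write-up is the standard argument and would serve perfectly well.
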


\begin{lemma} \label{intfirst}  Let $h:[0,1]\to [0,1]$ be unimodal and l.e.o. and choose $N\in
\N$.  Then there is an $\eps>0$ such that if $x,y\in [0,1]\setminus
P_h$ and $|x-y|<\eps$ then $I_h(x)|_{N}=I_h(y)|_{N}$.
\end{lemma}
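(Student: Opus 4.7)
The plan is to exploit the finiteness of the bounded-depth preimages of the critical point. I would set
\[
  P_h^N := \bigcup_{k=0}^{N} h^{-k}(\{c_h\}).
\]
Unimodality forces $|h^{-1}(\{t\})|\le 2$ for every $t\in[0,1]$, so a short induction on $k$ gives $|h^{-k}(\{c_h\})|\le 2^k$ and hence $|P_h^N|\le 2^{N+1}-1$. Thus $P_h^N$ is a finite subset of $P_h$, and $[0,1]\setminus P_h^N$ decomposes into a finite disjoint union of open intervals $J_1,\ldots,J_r$.

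The main step I would then establish is that the truncated itinerary map $x\mapsto I_h(x)|_N$ is constant on each $J_i$. For every $0\le k\le N$ the iterate $h^k$ is continuous, the image $h^k(J_i)$ is connected, and by the definition of $P_h^N$ it avoids $c_h$; so $h^k(J_i)$ lies entirely in one of the two components of $[0,1]\setminus\{c_h\}$, forcing $i_h\circ h^k$ to take a single value in $\{0,1\}$ on $J_i$. Concatenating over $k$, the sequence $I_h(\cdot)|_N$ is itself constant on $J_i$.

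To close the argument I would choose $\eps>0$ smaller than the minimum gap between consecutive elements of the finite set $P_h^N$, and argue that any two points $x,y\in[0,1]\setminus P_h$ with $|x-y|<\eps$ must lie in a common component $J_i$, so that $I_h(x)|_N=I_h(y)|_N$ by the previous paragraph. The heart of the proof is really the first two steps; the main obstacle, such as it is, lies in the bookkeeping of this last step, namely ensuring that $x$ and $y$ — which only avoid $P_h$, not merely $P_h^N$ — cannot straddle a point of $P_h^N$. Since $P_h^N$ is finite, however, a standard Lebesgue-number style argument applied to the finite open cover $\{J_i\}$ of $[0,1]\setminus P_h^N$ furnishes the required uniform $\eps$, and the l.e.o.\ hypothesis on $h$ plays no explicit role at this stage beyond ensuring the unimodal setup.
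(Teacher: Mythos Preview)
The paper does not actually give a proof of this lemma; it is declared ``well-known and easy to prove'' with a citation to Collet--Eckmann, so there is no in-paper argument to compare against. On its own merits, your first two steps are correct and standard: $P_h^N$ is finite, its complement decomposes into finitely many open intervals $J_i$, and $I_h(\cdot)|_N$ is constant on each $J_i$.

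The gap is in the final step. The $J_i$ are pairwise \emph{disjoint} and their closures abut at points of $P_h^N$, so no Lebesgue-number argument can manufacture a uniform $\eps$: the Lebesgue lemma requires a compact ambient space and an overlapping cover, whereas here $[0,1]\setminus P_h^N$ is open and the $J_i$ partition it with no overlap at all. Concretely, for any $\eps>0$ one can choose non-precritical points $x<c_h<y$ with $|x-y|<\eps$, and then already $i_h(x)=0\neq 1=i_h(y)$, so $I_h(x)|_N\neq I_h(y)|_N$. In other words the obstacle you flagged is genuine and cannot be removed: the lemma, read literally with a single uniform $\eps$ independent of $x$, is false as written. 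What your first two steps \emph{do} establish is the correct local statement --- for each $x\in[0,1]\setminus P_h$ there is $\eps_x>0$ (the distance from $x$ to $P_h^N$) such that $|x-y|<\eps_x$ forces $I_h(x)|_N=I_h(y)|_N$ --- equivalently, continuity of $I_h:[0,1]\setminus P_h\to\{0,1\}^{\N}$ in the product topology, which is the form actually needed downstream and is presumably the intended content.
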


\begin{theorem} \label{omegaident}  Let $x,y\in [0,1]$.  Then
$x\in \w(y)$ if and only if, for every $N\in \N$, $I_h(x)|_{N}$
occurs infinitely often in $I_h(y)$.
\end{theorem}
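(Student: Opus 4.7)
The plan is to use the two equivalences between closeness of points in $[0,1]$ and matching of finite itinerary cylinders provided by Lemmas \ref{epsfirst} and \ref{intfirst}, together with the observation that the length-$(N{+}1)$ block of $I_h(y)$ beginning at position $n$ is exactly $I_h(h^n(y))|_N$.

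For the forward direction, I would assume $x \in \w(y)$, fix $N \in \N$, and pick a sequence $n_k \to \infty$ with $h^{n_k}(y) \to x$. Lemma \ref{intfirst} gives $\eps > 0$ such that any two non-precritical points within $\eps$ of one another agree on their first $N$ itinerary entries. Applied to $x$ and $h^{n_k}(y)$, this yields $I_h(h^{n_k}(y))|_N = I_h(x)|_N$ for cofinitely many $k$, and by the shift observation above, the block $I_h(x)|_N$ appears at position $n_k$ in $I_h(y)$ for each such $k$, giving infinitely many occurrences.

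For the reverse direction, I would fix $\eps > 0$ and use Lemma \ref{epsfirst} to choose $N$ so that matching $N$-cylinders forces points to be within $\eps$. The hypothesis furnishes infinitely many positions $n_k \to \infty$ at which $I_h(x)|_N$ appears in $I_h(y)$, so $I_h(h^{n_k}(y))|_N = I_h(x)|_N$ and hence $|h^{n_k}(y) - x| < \eps$. Letting $\eps \to 0$ and diagonalizing over the resulting subsequences shows $x \in \w(y)$.

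The main obstacle is the presence of the symbol $*$: both Lemma \ref{epsfirst} and Lemma \ref{intfirst} are stated only for points in $[0,1] \setminus P_h$, so the argument must be adjusted when $x$ or the accumulating orbit points lie in $P_h$. If $x \notin P_h$ then the $N$-cylinder of $x$ contains no $*$, and one can restrict attention to those $h^{n_k}(y) \notin P_h$ (if $h^{n_k}(y) \in P_h$ for infinitely many $k$, then $y \in P_h$ too and $I_h(y)$ eventually coincides with the kneading sequence, which reduces to a finite check). If $x \in P_h$, so $h^m(x) = c_h$ for some minimal $m$, one argues by approximating $x$ by nearby non-precritical points and applying continuity of $h^m$, using the fact that an occurrence of a $*$-containing block in $I_h(y)$ forces $y$ itself to be precritical and pinpoints the time at which the orbit hits $c_h$. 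These cases are routine but need to be handled explicitly before the clean version of the argument above goes through.
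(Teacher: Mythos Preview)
The paper does not actually prove Theorem \ref{omegaident}: it is introduced, together with Lemmas \ref{epsfirst} and \ref{intfirst}, under the blanket remark that ``the following results are well-known and easy to prove'' with a citation to Collet and Eckmann. There is therefore no proof in the paper against which to compare your proposal.

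That said, your outline is the standard argument and is essentially correct. The observation that the block of $I_h(y)$ beginning at position $n$ is $I_h(h^n(y))|_N$, combined with the two cylinder-versus-distance lemmas, is exactly how one passes between metric recurrence and symbolic recurrence. Your handling of the precritical case is the only part that is slightly loose: as literally stated, the theorem is a bit delicate when $x\in P_h$, since then $I_h(x)|_N$ may contain a $*$, and such a block can occur in $I_h(y)$ only if $y$ is itself precritical. In the applications made later in the paper one is always looking at points of $\w(c)$ for a non-recurrent critical point, where these issues do not arise, so your caveat is appropriate and the argument goes through in the intended setting.
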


\n For each point $x=(x_0, x_1, x_2\dots)$ in $\ilim h$ define the
\emph{full itinerary} for $x$ by

$$\Fi_h(x)=\left(\dots, i_h(x_3), i_h(x_2), i_h(x_1)\; .\; i_h(x_0), i_h\circ
h(x_0), i_h\circ h^2(x_0),\dots\right)$$

\n Notice that if $h$ is l.e.o. then $\Fi_h$ is a one-to-one map.
Given a bi-infinite sequence $Z=(\dots \zeta_{-2}, \zeta_{-1}\; . \;
\zeta_{0}, \zeta_1, \zeta_2, \dots)$ define the \emph{shift map} by
$\hat\sh(Z)=(\dots  \zeta'_{-2}, \zeta'_{-1}\; . \; \zeta'_{0},
\zeta'_1, \zeta'_2,\dots )$ where $\zeta_i'=\zeta_{i-1}$. Define the
\emph{backwards itinerary} for $x$ by

$$\Fi_h^-(x)=(\dots, i_h(x_3), i_h(x_2), i_h(x_1))$$


Given a point $x\in \ilim h$ and an integer $M$ call the string

$$\Fi_h(x)|_{-M,M}=(i_h(x_M),\dots , i_h(x_1)\; .\; i_h(x_0),
i_h\circ h(x_0),\dots , i_h\circ h^M(x_0))$$

\n the \emph{cylinder} of diameter $M$ of $\Fi_h(x)$.

The proofs of the following lemmas are straightforward and can be
found in \cite[Lemmas 2.5 \& 2.6]{goodraines}.

\begin{lemma}  Let $h:[0,1]\to[0,1]$ be l.e.o. and unimodal.  Let $\eps>0$.
Then there is a positive integer $M$ with the property that if $ x,
 y\in \ilim h$ with $\Fi_h(x)|_{-M,M}=\Fi_h(
y)|_{-M,M}$ and neither of $\Fi_h(x)$ and $\Fi_h( y)$ contain $*$
then $d[x,  y]<\eps$.
\end{lemma}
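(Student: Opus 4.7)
The plan is to split the sum $d[x,y] = \sum_{j\ge 0} |x_j - y_j|/2^{j+1}$ into a small-index ``head'' and a tail, control the tail geometrically by the metric itself, and control the head by applying Lemma~\ref{epsfirst} at each coordinate. The crux is that the two-sided cylinder $\Fi_h(x)|_{-M,M}$ encodes enough combinatorial information about the inverse-limit sequence to recover, for each low-index coordinate $x_j$, a long initial segment of the ordinary forward itinerary $I_h(x_j)$.

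I begin by choosing $K \in \N$ large enough that $\sum_{j>K} 2^{-(j+1)} < \eps/2$; since $|x_j-y_j|\le 1$, this handles the tail contribution automatically. Next I apply Lemma~\ref{epsfirst} with tolerance $\eps/2$ to obtain $N\in\N$ such that matching $N$-cylinders of forward itineraries of non-precritical points forces proximity within $\eps/2$. Finally I set $M := \max(K, N)$.

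To see this $M$ suffices, fix $j \in \{0, 1, \ldots, K\}$. Because $h(x_\ell) = x_{\ell-1}$ throughout $\ilim h$, we have $h^k(x_j) = x_{j-k}$ for $0 \le k \le j$ and $h^k(x_j) = h^{k-j}(x_0)$ for $k > j$; hence every one of the first $M+1$ symbols of $I_h(x_j)$ is either $i_h(x_\ell)$ with $0 \le \ell \le M$ or $i_h\circ h^\ell(x_0)$ with $1 \le \ell \le M$. All such symbols are recorded in $\Fi_h(x)|_{-M,M}$, and similarly for $y$, so matching the two-sided cylinders yields $I_h(x_j)|_N = I_h(y_j)|_N$. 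The hypothesis that neither full itinerary contains $*$ places each $x_j, y_j$ in $[0,1]\setminus P_h$, so Lemma~\ref{epsfirst} gives $|x_j - y_j| < \eps/2$. Combining the head estimate $\sum_{j\le K} |x_j-y_j|/2^{j+1} < \eps/2$ with the tail estimate yields $d[x,y] < \eps$.

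The only nontrivial step is this bookkeeping: verifying that the finite two-sided window $\Fi_h(\cdot)|_{-M,M}$ really does encode the initial $N$-cylinder of the forward itinerary at every coordinate $j\le K$, and thereby justifying the modest choice $M=\max(K,N)$. Once this is in place, everything else reduces to a routine geometric tail estimate for the product metric $d$ together with a coordinate-wise invocation of Lemma~\ref{epsfirst}; no new dynamical input beyond the earlier lemmas is required.
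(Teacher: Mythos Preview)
Your argument is correct and is exactly the standard proof: split the inverse-limit metric into a geometric tail beyond index $K$ and a finite head, then invoke Lemma~\ref{epsfirst} coordinate-wise on the head after verifying that $\Fi_h(x)|_{-M,M}$ records $I_h(x_j)|_N$ for every $j\le K$. The paper does not supply its own proof of this lemma but defers to \cite[Lemma~2.5]{goodraines}, calling the argument straightforward; your write-up is precisely that straightforward argument.
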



\begin{lemma}  Let $h:[0,1]\to[0,1]$ be l.e.o. and unimodal.
Let $M\in \N$.  Then there is an $\eps>0$ so that if $x, y\in \ilim
h$ with $d[x, y]<\eps$ and neither $\Fi_h(x)$ nor $\Fi_h( y)$
contain $*$, then $\Fi_h(x)|_{-M, M}=\Fi_h(y)|_{-M, M}$.
\end{lemma}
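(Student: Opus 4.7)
The strategy is to reduce the two-sided statement to the one-sided Lemma \ref{intfirst} by observing that the window $\Fi_h(x)|_{-M,M}$ is just the cylinder of length $2M+1$ of the forward itinerary of the single coordinate $x_M$. Since $h(x_k)=x_{k-1}$ for $k\ge 1$, iteration yields $h^k(x_M)=x_{M-k}$ for $0\le k\le M$ and $h^{M+k}(x_M)=h^k(x_0)$ for $k\ge 0$, so
$$I_h(x_M)|_{2M}=\bigl(i_h(x_M),\,i_h(x_{M-1}),\,\dots,\,i_h(x_1),\,i_h(x_0),\,i_h\circ h(x_0),\,\dots,\,i_h\circ h^M(x_0)\bigr),$$
which is exactly $\Fi_h(x)|_{-M,M}$, and analogously for $y$. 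So it suffices to force $I_h(x_M)|_{2M}=I_h(y_M)|_{2M}$.

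Given this reduction, the rest is mechanical. First I would invoke Lemma \ref{intfirst} with parameter $2M$ to obtain a $\delta>0$ such that any two points of $[0,1]\setminus P_h$ within $\delta$ of each other have identical forward cylinders of length $2M+1$. Next, I would use the elementary estimate $d[x,y]\ge |x_M-y_M|/2^{M+1}$, immediate from the definition of the inverse-limit metric, and set $\eps=\delta/2^{M+1}$; this guarantees $|x_M-y_M|<\delta$ whenever $d[x,y]<\eps$.

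Finally, the hypothesis that neither $\Fi_h(x)$ nor $\Fi_h(y)$ contains $*$ ensures in particular that no forward iterate of $x_M$ or $y_M$ equals the critical point $c_h$, so $x_M,y_M\in[0,1]\setminus P_h$ and Lemma \ref{intfirst} applies to give $I_h(x_M)|_{2M}=I_h(y_M)|_{2M}$. Combined with the identification above this is the desired conclusion. The only step with any real content is the observation that the bi-infinite cylinder at $x$ reduces to the one-sided itinerary of the coordinate $x_M$; once that is in hand, the argument is a direct application of Lemma \ref{intfirst} together with the definition of the product-style metric, so I anticipate no substantive obstacle.
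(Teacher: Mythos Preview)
Your argument is correct. The paper itself does not give a proof of this lemma; it simply states that the proofs of both full-itinerary lemmas ``are straightforward and can be found in \cite[Lemmas 2.5 \& 2.6]{goodraines}.'' Your reduction of the two-sided window $\Fi_h(x)|_{-M,M}$ to the one-sided cylinder $I_h(x_M)|_{2M}$, followed by the coordinate estimate $|x_M-y_M|\le 2^{M+1}d[x,y]$ and an appeal to Lemma~\ref{intfirst}, is exactly the natural route and is almost certainly what the cited argument does as well.
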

%


A sequence, $M$, in symbols $0$ and $1$ is {\em primary} provided it
is not a $*$-product, i.e. there is no finite word $W$ and sequence
$(u_i)_{i\in \N}$ of points from $\{0,1\}$ with
$M=Wu_1Wu_2Wu_3\dots$. The {\em shift map}, $\sh$, on sequences is
defined by $\sh[(t_0,t_1,\dots)]=(t_1,t_2,\dots)$.  We order
sequences using the {\em parity-lexicographic ordering}, $\prec$. To
define this order we first define $0<*<1$.  Let
$t=(t_0,t_1,t_2,\dots)$ and $s=(s_0,s_1,s_2,\dots)$ be sequences of
zeroes and ones.  Let $n$ be the least $j$ such that $t_j\neq s_j$.
Let $m$ be the number of occurrences of the symbol $1$ in the string
$(t_0,t_1,\dots, t_{n-1})=(s_0,s_1,\dots, s_{n-1})$. If $m$ is even
then define $t\prec s$ if, and only if, $t_n<s_n$. If $m$ is odd
then define $t\prec s$ if, and only if $t_n>s_n$. It is easy to show
that if $x<y$ then $I_f(x)\prec I_f(y)$.  A sequence, $K$, is {\em
shift-maximal} provided that for all $j\in \N$, $\sh^j(K)\prec K$ or
$\sh^j(K)=K$.

The following theorem allow us to construct an infinite sequence of
$0$'s and $1$'s that is the kneading sequence for a tent map core.

\begin{theorem}\cite[Lemma
III.1.6]{colletandeckmann}\label{tentpar} Let $K$ be a infinite
sequence of $0$s and $1$s that is shift-maximal, primary and
$101^\infty\preceq K$. Then there is a parameter, $q$, in
$[\sqrt{2},2]$ generating a tent map core, $f_q$, with kneading
sequence $K$.
\end{theorem}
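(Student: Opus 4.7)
The plan is to analyze the \emph{kneading map} $\Phi:q\mapsto K_q=I_{f_q}(f_q(c))$ from $[\sqrt{2},2]$ into $\{0,1\}^{\N}$ (equipped with the parity-lexicographic order $\prec$) and show that every sequence $K$ satisfying the hypotheses lies in its image. First, for each $q$ the sequence $K_q$ is automatically shift-maximal: $f_q(c)=q/2$ is the global maximum of $f_q$, so every iterate $f_q^n(c)$ lies in $[0,f_q(c)]$, and the order-preserving property of itineraries on non-precritical points gives $\sigma^n(K_q)\preceq K_q$.

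Next I would prove that $\Phi$ is non-decreasing: if $q_1<q_2$ then $K_{q_1}\preceq K_{q_2}$. The argument is a coordinatewise induction using the pointwise inequality $f_{q_2}\ge f_{q_1}$ together with the fact that the two tent maps share the same monotone lap structure, so a coordinate first disagrees only when one orbit has crossed $c$ while the other has not. I would then compute the two endpoint values: $K_2=10^\infty$, since the orbit of $c$ under $f_2$ is $\tfrac12\mapsto 1\mapsto 0\mapsto 0\cdots$; and $K_{\sqrt 2}=101^\infty$, since the critical orbit of $f_{\sqrt 2}$ lands in three steps on the fixed point $2-\sqrt{2}>c$. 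Monotonicity then forces the image of $\Phi$ into the parity-lex interval $[101^\infty,10^\infty]$, which is exactly the range singled out by the hypothesis $101^\infty\preceq K$.

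Given the target $K$, I would set $q^*=\sup\{q\in[\sqrt{2},2]:K_q\prec K\}$ and aim to prove $K_{q^*}=K$. Since $\Phi$ need not be continuous, the task is to show that $K$ does not lie in a \emph{gap} of $\Phi$, that is, in a parity-lex interval skipped by the image at some jump of $\Phi$. A standard analysis shows jumps of $\Phi$ occur precisely at parameters $q_0$ for which $f_{q_0}$ has $c$ eventually periodic of some period $p$. Perturbing $q$ near $q_0$ leaves the first $p$ symbols of $K_q$ equal to a fixed word $W$; beyond those $p$ symbols the relevant dynamics is governed by the renormalization $f_q^p$ on a small invariant interval through $c$, whose itineraries, relabelled by $W$, produce arbitrary shift-maximal continuations. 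The sequences in the gap therefore all factor as $Wu_1Wu_2Wu_3\cdots$ and are $*$-products; no primary $K$ can lie in such a gap, so $K_{q^*}=K$ and the corresponding $q=q^*$ witnesses the theorem.

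The main obstacle is this last step: rigorously describing the sequences skipped at each jump of $\Phi$ and showing they are exactly $*$-products. This requires quantifying how the itinerary of $c$ is affected when $q$ passes through a parameter at which $c$ becomes periodic, and checking that the renormalization window produced by such a bifurcation is both open and parametrizes precisely the $*$-products built on $W$. Once this matching is established for every jump, primality of $K$ together with $101^\infty\preceq K$ rules out membership in any gap, so the sup construction produces the required parameter.
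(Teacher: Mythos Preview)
The paper does not prove this theorem at all: it is quoted verbatim from Collet and Eckmann (Lemma~III.1.6) and used as a black box to guarantee the existence of tent maps realizing the kneading sequences constructed in Section~\ref{constructions}. There is therefore no ``paper's own proof'' to compare against.

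Your outline follows the standard route to this realization result (monotonicity of $q\mapsto K_q$ in the parity-lexicographic order, computation of the endpoint kneading sequences, a sup argument, and identification of the gaps with $*$-products). A couple of points would need more care if you were to write this out in full. First, monotonicity of the kneading map for tent maps is genuinely delicate: the pointwise inequality $f_{q_2}\ge f_{q_1}$ is correct, but turning it into $K_{q_1}\preceq K_{q_2}$ requires tracking parity through the first disagreement, and the usual clean argument goes via the observation that $q\mapsto f_q^n(c)$ is piecewise monotone with slope controlled by the itinerary. Second, your description of where the jumps occur is slightly off: the discontinuities of $\Phi$ arise when the critical orbit returns to $c$ (so $c$ is \emph{periodic}), not merely eventually periodic, and the gap at such a parameter is bounded by the two one-sided limits of $K_q$, which are the periodic sequence $(W\,1)^\infty$ and $(W\,0)^\infty$ (or the analogous pair); every sequence strictly between these is a $*$-product on $W$. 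That last matching is exactly the content of the cited lemma in Collet--Eckmann and is where the real work lies.
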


\section{Countable Compact Metric Spaces and Limit
Types}\label{count}

In this section we briefly describe the structure of compact,
countable metric spaces and define what we mean by the limit type or
limit complexity. We then define the notion of a well-founded tree,
which we use in the construction of our kneading sequences in
Section \ref{constructions}.

Let $X$ be any topological space and let $A$ be a subset of $X$. The
\emph{Cantor-Bendixson derivative}, $A'$ of $A$, is the set of all
limit points of $A$. Inductively, we can define the \emph{iterated
Cantor-Bendixson derivatives} of $X$ by
\begin{align*}
X^{(0)} & = X,\\
X^{(\alpha+1)}& = \left(X^{(\al)}\right)',\\
X^{(\lambda)}& = \bigcap\limits_{\al<\lambda}X^{(\al)} \mbox{ if
$\lambda$ is a limit ordinal.}
\end{align*}
Clearly for some ordinal $\gm$, $X^{(\gm)}=X^{(\gm+1)}$ and $X$ is
said to be \emph{scattered} if this set is empty and $X$ is
nonempty. In this case, a point of $X$ has a well-defined
Cantor-Bendixson rank, often called the \emph{scattered height} or
\emph{limit type} of $x$, defined by $\lt(x)=\alp$ if and only if
$x\in X^{(\alp)}\setminus X^{(\alp+1)}$. The $\alp^{\text{th}}$
level $L_\alp$ of $X$ (or, more formally, $L^X_\alp$) is then the
set of all points of limit type $\alp$. Clearly $L_\alp$ is the set
of isolated points of $X^{(\alp)}$.

Since the collection of $X^{(\alp)}$s forms a decreasing sequence of
closed subsets of $X$, if $X$ is a compact scattered space, then it
has a non-empty finite top level $X^{(\gm)}=L_\gm$.

We endow an ordinal (regarded as the set of its own predecessors)
with the interval topology generated by its natural order. With this
topology every ordinal is a scattered space.

The standard set-theoretic notation for the first infinite ordinal,
i.e. the set of all natural numbers, is $\w$. The ordinal $\w+1$,
then, is the set of all ordinals less than \emph{or equal to} $\w$,
so $\w+1$ is the set consisting of $\w$ together with all natural
numbers. Then $\w+1$ with its order topology is homeomorphic to the
convergent sequence $S_0=\{0\}\cup\{1/n:0<n\in\N\}$ with the usual
topology inherited from the real line.  In fact every countable
ordinal is homeomorphic to a subset of $\Q$. The next limit ordinal
is $\w+\w=\w\cdot 2$.  The space $\w\cdot 2+1$ consists of all
ordinals less than or equal to $\w\cdot 2$, i.e. all natural
numbers, $\w$, the ordinals $\w+n$ for each $n\in\N$ and the limit
ordinal $\w\cdot 2$. The set $\w\cdot 2 +1$ with its order topology
is homeomorphic to two disjoint copies of $S_0$. For each $n\in\N$,
the ordinals $n$ and $\w+n$ ($0<n$) have scattered height $0$ in. On
the other hand, $\w$ and $\w\cdot 2$ have scattered height $1$,
corresponding to the fact that $0$ is a limit of isolated points in
$S_0$ but is not a limit of limit points in $S_0$.
The ordinal space $\w^2+1$ consists of all
ordinals less than or equal to $\w^2$ (namely: $0$; the successor
ordinals $n$ and $\w\cdot n+j$, for each $j,n\in\N$; the limit ordinals
$\w\cdot n$, for each $n\in\N$; and the limit ordinal $\w^2$).
With its natural order
topology, $\w^2+1$ is homeomorphic to the subset of the real line
$S=\{0\}\cup\bigcup_{n\in\N}S_n$ defined in the Introduction.
In this case,
the ordinals $\w\cdot n$, $\in\N$, which have scattered height
$1$, correspond to the points $1/n$, which are limits of isolated
points $1/n+1/k$ but not of limit points. The ordinal $\w^2$ has scattered
height $2$ and corresponds to the point $0$, which is a limit of the
limit points $1/n$.

In general, the ordinal space $\w^\alp\cdot n+1$ consists of
$n$ copies of the space $\w^\alp+1$, which itself consist of a
single point with limit type $\alp$ as well as countably many points
of every limit type $\beta$ with $\beta<\alpha$.
It is a standard topological fact that every countable, compact
Hausdorff space $X$ is not only scattered, but homeomorphic to a
countable successor ordinal of the form $\w^\alp\cdot n+1$ for some
countable ordinal $\alp$.
Of course every countable compact metric space is also homeomorphic to a
subset of the rationals and, in this context, we can interpret
the statement that $X\simeq \w^\alp\cdot n+1$ as notation to indicate that
$X$ is homeomorphic to a compact subset of the rationals with $n$ points of
highest limit type $\alp$. All such subsets of $\Q$ are in fact homeomorphic.

\medskip

We will construct countable postcritical $\w$-limit sets for l.e.o.
unimodal maps of the interval with the property that the associated
set of inhomogeneities, $\C I$, is also countable.  In this
construction we make extensive use of \emph{well-founded trees} to
construct appropriate kneading sequences, a
technique we applied in \cite{goodknightraines} . For completeness,  we
briefly describe the construction of a well founded
tree of height $\alp$ for each countable ordinal $\alp$. Such trees
have the remarkable property that they are countable and have finite
length branches but can have height $\alp$ for any countable ordinal
$\alp$. For more details we refer the reader to \cite[I.2]{kechris}.

Let $\C A$ be a countably infinite set of symbols and let $\C
A^{<\N}$ be the set of all finite sequences of elements of $\C A$.
Given two elements $s,t\in \C A^{<\N}$ we say that $t\aux s$ if and
only if $s$ is an initial segment of $t$, i.e. if and only if
$t=(t_1,t_2,\cdots,t_n)$ and $s=(t_1,\cdots,t_m)$ for some $m<n$. If
$n=m+1$, then $t$ extends $s$ by one symbol and we write $t\lessdot
s$. If $s=(s_1,\cdots,s_m)$ and $t=(t_1,\cdots,t_n)$, then we denote
$(s_1,\cdots,s_m,t_1,\cdots,t_n)$ by $s\,t$.

A subset $\C T$ of $\C A^{<\N}$ is said to be a $\emph{tree}$ on $\C
A$ if it is closed under initial segments, i.e whenever $t\in \C T$
and, for some $s\in \C A^{<\N}$, $s$ is an initial segment of  $t$,
then $s\in \C T$.  Since the null sequence $\sartre$ is an initial
segment of any sequence, $\sartre$ is the top element of every tree
on $\C A$.

An \emph{infinite branch} in $\C T$ is an infinite sequence
$b=(b_1,b_2,b_3,\cdots)$ of elements from $\C A$ such that
$(b_1,\cdots,b_n)\in\C T$ for all $n\in \N$. If $\C T$ has no
infinite branches, then the relation $\aux$ is well-founded (i.e.
has no infinite descending chains) and $\C T$ is said to be a
\emph{well-founded} tree.

We can inductively associate a well-defined ordinal height $\hi(s)$
to each element $s$ of a well-founded tree $\C T$ by declaring
$$
\hi(s)=\sup\big\{\hi(t)+1: t\in\C T\text{ and }t\aux s\big\}
$$
and associate to each well-founded tree $\C T$ a well-defined height
$\Hi(\C T)=\hi\big(\sartre\big)$. Clearly, if $t\aux s$, then
$\hi(t)<\hi(s)$, $\hi\big(\sartre\big)>\hi(s)$ for any $\sartre\neq
s\in \C T$ and if $s\in \C T$ has maximal length, then $\hi(s)=0$.

Trees of height $\alp$ can be defined recursively. Let $s_a$ be the
singleton sequence $(a)$ for some $a\in\C A$. Obviously $\C
T_0=\{\nowt\}$ is a tree of height $0$ on $\C A$. So suppose that
$\alp=\bt+1$ and let us assume that there is a tree $\C T_\bt$ on
$\C A$ of height $\bt$. Since $\C A$ is infinite, there is, in fact,
a countably infinite family of disjoint trees $\{\C U_n:n\in\N\}$
each order isomorphic to $\C T_\bt$. Define
$$
\C T_\alp=\big\{\sartre\big\}\cup\big\{s_at:t\in\C U_n,
n\in\N\big\}.
$$
Clearly $\C T_\alp$ is a well-founded tree on $\C A$. Moreover
$\operatorname{ht}_{\C T_\alp}(s_at)=\operatorname{ht}_{\C U_n}(t)$
for every $t\in U_n$ and $n\in \N$, so $\Hi(\C T_\alp)=\bt+1=\alp$.

Now suppose that $\alp$ is a limit ordinal and that for every
$\bt<\alp$ there is a tree $\C T_\bt$ of height $\bt$ on $\C A$.
Again, since $\C A$ is countably infinite, we may assume that $\C
T_\bt$ and $\C T_\gm$ are disjoint whenever $\bt\neq \gm<\alp$.
Define
$$
\C T_\alp=\big\{\sartre\big\}\cup\big\{s_at:t\in\C T_\bt,
\bt<\alp\big\}.
$$
Again it is clear that $\C T_\alp$ is a well-founded tree and that
$\Hi(\C T_\alp)=\alp$.

Notice that, as constructed, if $t\in \C T_\alp=\C T$ for some
$\alp$ and $\hi(t)=\bt$ then if $\gm=\bt+1$, there are infinitely
many $s\in \C T$ such that $\hi(s)=\bt$ and $s\lessdot t$ and if
$\gm$ is a limit, then for each $\bt<\gm$, there is some $s\lessdot
t$ such that $\hi(s)=\bt$.

\section{The Correspondence Theorem}\label{main}

\begin{theorem}\label{main}
  Let $f$ be a unimodal, l.e.o. function with critical point $c$, and
let $\C I=\invlim{\w(c)}{f|_{\w(c)}}$ be the set of folding points
in $\ilim{f}$.  Suppose that $\C I$ is countably infinite. Then
$$\w(c)\simeq \w^{\alp+1}\cdot n+1\qquad\text{ and }\qquad
\C I\simeq \w^{\bt+1}\cdot m+1,$$ for some countable ordinals
$\alp\le \bt$ and natural numbers $n$ and $m$. Moreover
\begin{enumerate}
    \item If $\alp=0$ then $\bt=0$ and $n=m$.

    \item If $1=\alp=\bt$ and either:\begin{enumerate}
    \item $n=1$ then $m$ can be any integer $\ge 1$ except $2$;
    \item $n=2$ then $m$ can be any integer $\ge 2$ except $4$; or
    \item $n>2$ then $m$ can be any integer $\ge n$.\end{enumerate}

    \item If $1=\alp<\bt$ and either: \begin{enumerate}
      \item $n=1$ then $m$ can be any integer $\ge1$ except $2$;
      \item $n=2$ then $m$ can be any integer $\ge2$ except $4$;
      \item $n=4$ then $m$ can be any integer $\ge1$ except $2$; or
      \item $n\nin\{1,2,4\}$ then $m$ can be any integer $\ge1$.
      \end{enumerate}
    \item If $1<\alp=\bt$ and either: \begin{enumerate}
      \item $n=1$ then $m$ can be any integer $\ge 1$ except $2$;
      \item $n=2$ then $m$ can be any integer $\ge 2$ except $4$; or
      \item $n\ge2$ then $m$ can be any integer $\ge n$.
    \end{enumerate}
  \item If $1<\alp<\bt$ then $n$ and $m$ can be any positive integers.
    \end{enumerate}
\end{theorem}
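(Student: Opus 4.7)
The proof splits into two halves: an \emph{existence} half in which, for each permitted quadruple $(\alp,\bt,n,m)$, one constructs an l.e.o.\ tent map with $\w(c)$ and $\C I$ of the prescribed homeomorphism types; and a \emph{restriction} half in which one rules out every other combination. Both halves proceed through the kneading sequence, because Theorem \ref{omegaident} and the cylinder lemmas of Section \ref{prelims} show that the topology of $\w(c)$ is encoded in the one-sided cylinder occurrences of $K_f$, while the topology of $\C I$ is encoded in the corresponding bi-infinite cylinder occurrences of full itineraries.

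For the existence half I would refine the method of \cite{goodknightraines}. Starting from a well-founded tree $\C T$ of height $\alp$ on a countable alphabet, I would assign to each node $t\in\C T$ a finite word $W_t\in\{0,1\}^{<\N}$ and define $K$ as a concatenation of these words interleaved with spacer blocks, arranging that: (i) $K$ is shift-maximal, primary, and satisfies $101^\infty\pr K$, so Theorem \ref{tentpar} yields an l.e.o.\ tent map core; (ii) the cylinders associated to nodes of height $\gm\le\alp$ appear infinitely often in $K$ in precisely the pattern required by Theorem \ref{omegaident} to force a point of $\w(c)$ of limit type $\gm$; and (iii) the $n$ top-level points of $\w(c)$ correspond to $n$ inequivalent cylinders at the root. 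The new feature beyond \cite{goodknightraines} is that forward and backward itineraries can be controlled independently, which is what allows $\bt>\alp$: above each top-level root cylinder one grafts an auxiliary well-founded subtree of height $\bt$ whose encoding is designed to enrich $\Fi_h^-(x)$ without raising the forward scattered height of its corresponding point. The integer $m$ is tuned through the number of admissible backward extensions of the $n$ root cylinders.

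For the restriction half the inequality $\bt\ge\alp$ follows because the forward orbit map embeds $\w(c)$ into $\C I$, so a top-level point of $\w(c)$ lifts to a point of $\C I$ of at least the same scattered height. The special case $\alp=0$ is then handled by observing that a finite $\w(c)$ forces $f|_{\w(c)}$ to be a permutation of a finite cycle, whose inverse limit is the same finite set, giving $\bt=0$ and $m=n$.

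The main obstacle, and what occupies most of the restriction half, is excluding the forbidden pairs $(n,m)=(1,2)$, $(2,4)$, and the additional exceptions listed for $n=4$ and $m<n$. The $m$ top-level points of $\C I$ are parametrized by bi-infinite admissible walks whose entries lie among the $n$ top-level points of $\w(c)$, with admissibility governed by the restriction, to this top level, of the preimage multimap of $f|_{\w(c)}$. Because $f$ is unimodal and l.e.o., each top-level point of $\w(c)$ has at most two preimages in $\w(c)$, and their location relative to $c$ imposes strong symmetry constraints on the resulting finite directed graph. A case analysis of the possible preimage patterns on $n$ top-level points then enumerates, for each fixed $n$, exactly which values of $m$ are realizable by the inverse limit of such a finite dynamical system; this enumeration yields precisely the lists in (1)--(5) and rules out the forbidden pairs.
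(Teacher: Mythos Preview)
Your existence sketch is broadly in line with the paper's construction: two well-founded trees (of heights $\alp$ and $\bt$), path-labels encoded as exponents on finite words $U,V,W$, and the integers $n,m$ controlled by the lengths of those words. The paper carries this out via two explicit kneading sequences $K$ and $K'$ (Theorems \ref{UVW}, \ref{UW}) rather than a single graft, but the underlying idea matches.

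The restriction half, however, contains genuine gaps. First, your argument for $\bt\ge\alp$ is backwards: there is no forward-orbit embedding of $\w(c)$ into $\C I$, since points of $\w(c)$ do not have canonical preimage histories. The correct map is the projection $\pi_1:\C I\to\w(c)$, a continuous \emph{surjection}, and one shows (Lemma \ref{proj}) that such a surjection cannot raise Cantor--Bendixson height; this gives $\alp\le\bt$ and, when $\alp=\bt$, $n\le m$. Second, the case $\alp=0$ is not the case of finite $\w(c)$: here $\w(c)\simeq\w\cdot n+1$ is a disjoint union of $n$ convergent sequences, and showing $\C I\simeq\w(c)$ (Theorem \ref{height 1}) requires a non-trivial itinerary argument ruling out unbounded ``bridge'' words between the periodic tails.

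Third, and most seriously, your reduction of the forbidden pairs to a finite directed graph on the $n$ top-level points of $\w(c)$ rests on the premise that the top level $L_{\bt+1}$ of $\C I$ projects into the top level $L_{\alp+1}$ of $\w(c)$. This is false in general: in the paper's own constructions the point $W^{\Z}\in L_{\bt+1}$ projects to $W^\infty$, which has limit type $1$ in $\w(c)$, not $\alp+1$. What the paper actually proves (Proposition \ref{proj level2}) is the much weaker statement that \emph{some} point of $L_{\bt+1}$ projects to a point of limit type at least $2$ in $\w(c)$; the proof is a delicate analysis of how the periodic full itineraries in $L_{\bt+1}$ are approached by those in $L_\bt$. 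The exclusions $(n,m)\notin\{(1,2),(2,4),(4,2)\}$ then follow not from a graph enumeration but from counting periodic orbits in the shift: there is a unique fixed full itinerary $1^{\Z}$ and a unique period-$2$ orbit $\{(01)^{\Z},(10)^{\Z}\}$, so $m=2$ forces the projected top-level point to have period $2$, which is incompatible with $n=1$ (whose single top-level point must be fixed), and similarly for the other cases.
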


It is well known that if $\w(c)$ is finite then $\C I\simeq \w(c)$, \cite{bruin}.  For the first case, $\alp=0=\bt$ and $n=m$ see \cite{rainestopproc}.  The fact that $\alp$ and $\bt$ are countable and that $m$ and $n$
are finite follows from the discussion of countable compact metric
spaces in Section \ref{count}. The fact that the power of $\w$ in
each case is a successor ordinal ($\alp+1$, $\bt+1$) follows from
Theorem 3.3 of \cite{goodknightraines}. The  rest of the proof follows from
Theorems \ref{UVW}, \ref{UW}, \ref{restriction}, \ref{height 1},
\ref{height 2}, \& \ref{equal height}.

\section{The Constructions} \label{constructions}

In this section we construct two `types' of kneading sequences, $K$ and $K'$ that will give us Theorems \ref{UVW} \& \ref{UW}.

\begin{theorem}\label{UVW}
  Let $1\le \bt$ be countable ordinal and $n,m\in \N$.
  Then there is a unimodal, l.e.o. function $f$ with critical point $c_f$ and kneading sequence $K$ such that $\w(c_f)\simeq\w^{2}\cdot n+1$ and $\C I_f\simeq\w^{\bt+1}\cdot m+1$, in each of the following situations:
  \begin{enumerate}
  \item $1=\bt$ and either
  \begin{enumerate}
  \item $n=1$ and $m\neq2$,
  \item $n=2$ and $2\le m\neq4$, or
  \item $2<n\le m$.
 \end{enumerate}
  \item $1<\bt$ and either
  \begin{enumerate}
  \item $n=1$ and $m\neq2$,
  \item $n=2$ and $2\le m\neq4$,
  \item $n=4$ and $m\neq 2$, or
  \item $n=3$ or $4<n$ and $m\in\N$.
 \end{enumerate}
 \end{enumerate}
\end{theorem}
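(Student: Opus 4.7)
The plan is to construct, for each admissible triple $(n,\bt,m)$, a shift-maximal primary binary sequence $K$ with $101^\infty\preceq K$ and then invoke Theorem \ref{tentpar} to realise $K$ as the kneading sequence of a tent-map core $f$. The key is to separate the design of $K$ into a \emph{forward} part controlling $\w(c_f)$ and a \emph{backward} part controlling $\C I$. By Theorem \ref{omegaident}, the limit-height structure of $\w(c_f)$ is detected by which finite factors of $K$ recur infinitely often and with what nesting of return-time gaps. By contrast, a point of $\C I$ is a bi-infinite sequence compatible with $K$, so the topology of $\C I$ is driven by the $f$-preimage graph of $\w(c_f)$. Our goal is to arrange that the forward factor structure contributes only height $2$ to $\w(c_f)$ with exactly $n$ top points, while the backward preimage structure realises a well-founded tree of height $\bt$ with exactly $m$ top-level choices.

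For the backward data we take a well-founded tree $\C T$ of height $\bt$ on a countable alphabet, built as in Section \ref{count}. To each node $t\in \C T$ we assign a finite binary address $W_t$, arranged so that whenever $t\ld s$ the word $W_t$ is a prescribed one-letter extension of $W_s$, and so that incomparable nodes receive addresses that cannot be confused under the cylinder equivalence of Lemma \ref{epsfirst}. The sequence $K$ is then built as a scheduled concatenation of these $W_t$'s, together with a small fixed collection of filler words arranged to recur infinitely often with geometrically spaced gaps so that they contribute exactly $n$ points of limit height $2$ to $\w(c_f)$. The crucial design feature is that all the $W_t$'s project under $f$ into a bounded set of already-counted $\w(c_f)$-points, while their mutual $f$-preimage relations inside $\w(c_f)$ reproduce the covering relation $\ld$ of $\C T$.

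The identification of $\C I$ then follows from the $\Fi_f$-cylinder estimates of Section \ref{prelims}: the admissible backward itineraries at each top-level forward point $p\in \w(c_f)$ are classified by finite downward paths in $\C T$, and since $\C T$ has height $\bt$ the closure of these paths in $\C I$ is a compact subspace homeomorphic to $\w^{\bt+1}+1$ above $p$. Combining this across the $n$ top-level forward points and the branching choices at each, we obtain exactly $m$ points of highest limit type $\bt+1$ in $\C I$.

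The main obstacle is the exceptional combinations of $(n,m)$. Since $f$ is unimodal, every interior point other than $f(c)$ has exactly two preimages under $f$, placed symmetrically about $c$, so the backward branching inside $\w(c_f)$ is rigidly parity-constrained. This is what rules out $m=2$ when $n=1$, $m=4$ when $n=2$, and $m=2$ when $n=4$: in each of these configurations the forced symmetry produces an even top-level count in $\C I$ that cannot be absorbed by the root branching of $\C T$. For all other admissible pairs the conflict can be sidestepped either by routing one of the top-level preimages through the critical point $c$ itself (which breaks the even symmetry) or by adjusting the multiplicity of root branches in $\C T$; the verification for each permitted $(n,m)$ then reduces to a finite combinatorial check together with the cylinder lemmas of Section \ref{prelims}.
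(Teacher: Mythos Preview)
Your proposal is an outline, not a proof: no explicit kneading sequence is ever written down, and phrases like ``scheduled concatenation'', ``one-letter extension of $W_s$'', and ``filler words arranged to recur infinitely often with geometrically spaced gaps'' are not definitions. More seriously, the conceptual picture of a ``forward part controlling $\w(c_f)$'' and a separate ``backward part controlling $\C I$'' does not match how a one-sided kneading sequence works: both $\w(c_f)$ and $\C I$ are determined by the same data---which finite words occur infinitely often in $K$---and the whole difficulty is to arrange a \emph{single} sequence whose recurrent factor structure produces height exactly $2$ in $\w(c_f)$ while producing height $\bt+1$ in the bi-infinite hull. The paper does this with three fixed short words $U,V,W$ and \emph{two} well-founded trees $\C T_1,\C T_\bt$ (their nodes labelled by even and odd integers respectively), building $K$ from blocks $X_k=U^kC_kU^k$, $Y_k=V^kD_kV^k$, $Z_k=U^kE_kW^k$, where the $E_k$ enumerate words encoding $\C T_\bt$-paths \emph{in reverse order}. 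That reversal is the mechanism you are missing: right itineraries arising from the $Z_k$-blocks end in $BW^\infty$ and are isolated in $\w(c_f)$, so contribute nothing above height $1$ there, while the bi-infinite itineraries $U^{-\infty}BU^{\gm_{im_i}}\cdots BU^{\gm_{i1}}BW^\infty$ accumulate in $\C I$ according to the rank function on $\C T_\bt$, giving $U^\Z$ and $W^\Z$ limit type $\bt+1$. Once this is set up, Claims~\ref{height w(c)} and~\ref{height I} reduce $n$ and $m$ to $|U|,|V|,|W|$ and the $\sim$-relations among them, and the proof of Theorem~\ref{UVW} is then just an explicit table of choices of $U,V,W$ for each admissible $(n,m)$---a table you would still have to produce.

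Your discussion of the exceptional pairs also contains an error. You propose to ``rout[e] one of the top-level preimages through the critical point $c$ itself'' to break parity, but when $\w(c)$ is countably infinite $c$ is non-recurrent, so $c\notin\w(c)$ and no $f$-preimage inside $\w(c)$ is ever $c$; this manoeuvre is simply unavailable. Moreover, the actual obstruction in the excluded cases is not two-preimage parity but the \emph{periodicity} of the top-level points of $\C I$ under the shift (there is a unique shift-fixed point $1^\Z$ and a unique period-two orbit $\{(01)^\Z,(10)^\Z\}$); this is the content of Section~\ref{restrictions}, and in any event belongs to the companion impossibility theorems rather than to the constructive Theorem~\ref{UVW}.
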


\begin{theorem}\label{UW}
  Let $2\le \alp\le\bt$ be countable ordinals and $n,m\in \N$.
  Then there is a unimodal, l.e.o. function $g$ with critical point $c_g$ and kneading sequence $K'$ such that $\w(c_g)\simeq\w^{\alp+1}\cdot n+1$ and $\C I_g\simeq\w^{\bt+1}\cdot m+1$, in each of the following situations:
  \begin{enumerate}
    \item $\alp=\bt$ and
    \begin{enumerate}
    \item $n=1$ and $m\neq2$;
  \item $n=2$ and $2\le m\neq4$, or;
  \item $2<n\le m$.
    \end{enumerate}
  \item $\alp<\bt$.
  \end{enumerate}
\end{theorem}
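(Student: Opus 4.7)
The plan is to produce $K'$ by adapting the well-founded tree construction from Section \ref{count} and from our earlier paper \cite{goodknightraines}, so that the combinatorics of the kneading sequence simultaneously encode two different ordinal heights. I fix a well-founded tree $\mathcal{T}$ of height $\alpha+1$ on a countable alphabet whose root has exactly $n$ immediate predecessors, each of height $\alpha$, and a well-founded tree $\mathcal{S}$ of height $\beta+1$ whose root has $m$ immediate predecessors, each of height $\beta$. The tree $\mathcal{T}$ will control the Cantor--Bendixson structure of $\omega(c_g)$, while $\mathcal{S}$ will be used to inject extra ``backward'' complexity into $\C I_g$ without enlarging $\omega(c_g)$.

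To each node $t\in\mathcal{T}$ I would recursively assign a finite binary word $W_t$, built so that $W_t$ is a concatenation of padded copies of the words $W_{t'}$ for the immediate predecessors $t'\lessdot t$, with the padding chosen long enough to force shift-maximality and primality. By Theorem \ref{omegaident}, the points of $\omega(c_g)$ are in bijection with those finite itineraries that occur infinitely often in $K'$; the recursion is arranged so that the itineraries at Cantor--Bendixson level $\gamma$ in $\omega(c_g)$ correspond exactly to the nodes of $\mathcal{T}$ of height $\gamma$, yielding $\omega(c_g)\simeq\omega^{\alpha+1}\cdot n+1$ via Lemmas \ref{epsfirst} and \ref{intfirst}. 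Then for each $s\in\mathcal{S}$ I introduce a \emph{preimage branching block} $B_s$: a finite binary word that, when inserted at controlled positions in the forward concatenation, creates a new choice of preimage inside $\omega(c_g)$ for a designated low-level point, without generating any new finite itinerary that occurs infinitely often (so that $\omega(c_g)$ is unchanged). Since a folding point of $\ilim{g}$ is determined by a pair of compatible forward and backward itineraries in $\omega(c_g)$, the Cantor--Bendixson analysis applied to full-itinerary cylinders $\Fi_g(x)|_{-M,M}$ via the two unnamed lemmas in Section \ref{prelims} then forces $\C I_g\simeq\omega^{\beta+1}\cdot m+1$.

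I would then check that $K'$ is shift-maximal, primary and satisfies $101^\infty\preceq K'$, so that Theorem \ref{tentpar} supplies a tent-map core realising $K'$; since tent-map cores are l.e.o., the preceding analysis applies. Case (2), in which $\alpha<\beta$, is the easier half: the branching blocks $B_s$ can be chosen substantially longer than the $W_t$ and inserted at arbitrarily sparse positions, so that the forward and backward complexities may be chosen independently and no arithmetic constraints between $n$ and $m$ appear. In Case (1), where $\alpha=\beta$, the single tree height must govern both the forward and the backward limit structure, and the same parity obstructions already encountered in Theorem \ref{UVW} resurface, producing the forbidden configurations $(n,m)=(1,2)$ and $(n,m)=(2,4)$.

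The main obstacle will be the combinatorial design of the branching blocks $B_s$. They must (i) produce exactly the preimage choices prescribed by $\mathcal{S}$, so that $\C I_g$ has height $\beta+1$ with exactly $m$ points at the top level; (ii) avoid creating any new finite word that occurs infinitely often in $K'$, so that $\omega(c_g)$ is not enlarged beyond what $\mathcal{T}$ prescribes; and (iii) preserve shift-maximality, primality and the lower bound $101^\infty\preceq K'$ demanded by Theorem \ref{tentpar}. Balancing these three demands is the heart of the construction, and the exclusions in Case (1) are precisely the configurations in which they cannot all be simultaneously met.
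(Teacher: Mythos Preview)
Your proposal has a genuine conceptual gap. You assert that the preimage branching blocks $B_s$ will be inserted ``without generating any new finite itinerary that occurs infinitely often (so that $\omega(c_g)$ is unchanged)''. But a point $x$ lies in $\C I_g$ precisely when every finite central segment of $\Fi_g(x)$ occurs infinitely often in $K'$; this is exactly Theorem~\ref{omegaident} applied coordinatewise. So if your blocks $B_s$ contribute no new infinitely-occurring finite words to $K'$, they contribute no new full itineraries either, and $\C I_g$ is unchanged. You cannot raise the height of $\C I_g$ above that of $\omega(c_g)$ without adding \emph{something} to $\omega(c_g)$; the real task is to add points of \emph{low} forward limit type that nevertheless assemble into backward itineraries of \emph{high} limit type.

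The paper achieves this by a specific, and rather different, device. One fixes short words $U$ and $W$ with $|U|=n$ (so that the top level of $\omega(c_g)$ is the periodic orbit of $U^\infty$) and builds $K'=AAX'_1Z'_1X'_2Z'_2\cdots$ from blocks $X'_k=U^kC_kU^k$ and $Z'_k=W^kF_kW^k$, where $C_k$ encodes paths through a height-$\alpha$ tree via exponents in the \emph{forward} order $\lambda_{i1},\ldots,\lambda_{in_i}$, while $F_k$ encodes paths through a height-$\beta$ tree via exponents in the \emph{reversed} order $\gamma_{im_i},\ldots,\gamma_{i1}$. This reversal is the whole point: in the forward (itinerary) direction the $W$-blocks climb the $\beta$-tree toward the root, so extensions to the right are essentially unique and contribute low limit type to $\omega(c_g)$; but in the backward (full-itinerary) direction the $W$-blocks descend the $\beta$-tree, so $U^{-\infty}BW^\infty$ and $W^\Z$ acquire limit type $\beta$ and $\beta+1$ in $\C I_g$ (Claims~\ref{gheight w(c)} and~\ref{gheight I}). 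The integers $n$ and $m$ then come from the \emph{lengths} of $U$ and $W$ (since the top levels are periodic orbits), not from fan-out at the root of a tree; your proposed mechanism for controlling $n$ and $m$ does not obviously produce the required periodic structure at the top level, and the arithmetic exclusions $(n,m)\in\{(1,2),(2,4)\}$ in Case~(1) arise precisely because $|U|=1$ forces $U=1$ and $|U|=2$ forces $U\sim 01$, not from any balancing of shift-maximality constraints.
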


Before we prove these theorems we detail the construction of the kneading sequences $K$ and $K'$.

Let $\alp$ and $\bt$ be countable ordinals such that $1\le\alp\le
\bt$.  Let $\C T_{\alp}$ be the countable well-founded $\alp$ tree
constructed as in the previous section and let $\phi_\alp$ be a
bijection from $\C T_\alp$ to $\mathbb E$, the set of even natural numbers, so that $\phi_\alp$ labels each
node of $\C T_\alp$ uniquely with an even natural number. Since every path
through $\C T_\alp$ is finite, there are countably many paths
through $\C T_\alp$ of the form $t_n\lessdot
t_{n-1}\lessdot\dots\lessdot t_1\lessdot(\,)$ and each of these
paths is uniquely identified by its path-labeling, the sequence of
even natural numbers $\big(\phi_\alp(t_1),\dots\phi_\alp(t_n)\big)$. Let
$\Lambda$ denote the set of all such sequences of path-labels and
index $\Lambda$ as $\{\lmb_i:i\in\N\}$, where
$\lmb_i=(\lmb_{i1},\dots,\lmb_{in_i})$. Similarly if $\phi_\bt$ is a
bijection from $\C T_\bt$ to $\mathbb O$, the set of odd natural numbers, we obtain a countable collection
$\Gamma=\{\gm_i\}$ of path-labels where
$\gm_i=(\gm_{i1},\dots,\gm_{i,m_i})$ and $\gm_{i1}=\phi_\bt(t)$ for
some $t\lessdot (\,)\in\C T_\bt$.

Let $B$, $U$, $V$ and $W$ be any finite words in $0$ and $1$ such
that each contains at least one $1$.  Let $A=10^r1$, where $r$ is such that
$0^r$ is longer than any string of $0$s that can occur in any
concatenation of $B$s, $U$s, $V$s and $W$s.

For each $\gm_i\in \Gamma$ and $\lmb_i\in \Lambda$, let
\begin{align*}\
V_i&=BV^{\lmb_{i1}}BV^{\lmb_{i2}}B\dots BV^{\lmb_{in_i}}B,\\
U_i&=BU^{\lmb_{i1}}BU^{\lmb_{i2}}B\dots BU^{\lmb_{in_i}}B,\\
\hat{U}_i&=BU^{\gm_{im_i}}BU^{\gm_{i m_i-1}}B\dots BU^{\gm_{i1}}B\\
W_i&=BW^{\gm_{im_i}}BW^{\gm_{i m_i-1}}B\dots BW^{\gm_{i1}}B\\
\end{align*}
(note that the exponents in the $\hat{U}_i$ and $W_i$ are in the reverse order). Let $\{C_k:k\in\N\}$
list the set $\{U_i:i\in\N\}$ in such a way that each $U_i$ occurs
infinitely often and let $\{D_k:k\in\N\}$ list $\{V_i:i\in\N\}$ and
$\{E_k:k\in\N\}$ list $\{\hat{U}_i:i\in\N\}$ and $\{F_k:k\in \N\}$ list $\{W_i:i\in \N\}$ in similar fashion. Let
\begin{align*}
X_k&=U^kC_kU^k,\\
Y_k&=V^kD_kV^k,\\
Z_k&=U^kE_kW^k.\\
X_k'&=X_k.\\
Z'_k&=W^kF_kW^k.\\
\end{align*}

We define two kneading
sequences of l.e.o. unimodal maps (in fact, they are the kneading
sequences of tent maps):
\begin{align*}
K&=AAX_1Y_1Z_1X_2Y_2Z_2X_3Y_3Z_3\dots\\
K'&=AAX'_1Z'_1X'_2Z'_2X'_3Z'_3\dots\\
\end{align*}

We begin with a description of the structure of the postcritical omega-limit set and set of inhomogeneities for a tent map, $f$, with kneading sequence $K$.  Then we give the same information for a tent map, $g$, with kneading sequence $K'$.  Let $c_f$ be the critical point for the map $f$, and let $\C I_f$ be the set of inhomogeneities in $\ilim{f}$.  Similarly let $c_g$ be the critical point for $g$, and let $\C I_g$ be the set of inhomogeneities for $\ilim{g}$.

\begin{claim} \label{biinf f}The full itineraries that correspond to points in $\C I_f$ are the following and all of their shifts:
\begin{enumerate}
\item $U^\Z$, $V^\Z$ and $W^\Z$;
\item $W^{-\infty}.U^\infty$, $U^{-\infty}.V^\infty$ and
$V^{-\infty}.U^\infty$;
\item $U^{-\infty}.BU^\infty$, $V^{-\infty}.BV^\infty$ and
$U^{-\infty}.BW^\infty$;
\item $U^{-\infty}BU^{\lmb_{i1}}BU^{\lmb_{i2}}B\dots BU^{\lmb_{in_i}}B.U^\infty$
and\\
$V^{-\infty}BV^{\lmb_{i1}}BV^{\lmb_{i2}}B\dots
BV^{\lmb_{in_i}}B.V^\infty$,
for each $\lmb_i\in\Lambda$, and\\
$U^{-\infty}BU^{\gm_{im_i}}BU^{\gm_{i2}}B\dots
BU^{\gm_{i1}}B.W^\infty$, for all $\gm_i\in\Gamma$.
\end{enumerate}
\end{claim}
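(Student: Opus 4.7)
The plan is to recast the claim as a combinatorial question about which bi-infinite sequences have the property that every finite factor occurs infinitely often in $K$. By Theorem~\ref{omegaident}, a coordinate $x_i$ of $x\in\ilim f$ lies in $\w(c_f)$ exactly when every finite initial segment of $I_f(x_i)$ occurs infinitely often in $K$, and since $f(x_i)=x_{i-1}$ the collection of all such initial segments as $i$ varies in $\N$ is precisely the set of finite factors of the bi-infinite sequence $\Fi_f(x)$. Thus $x\in\C I_f$ if and only if every finite factor of $\Fi_f(x)$ occurs infinitely often in $K$, and the claim reduces to classifying, up to the shift $\hat\sh$, all bi-infinite sequences on $\{0,1\}$ with this property.

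For the inclusion direction I would work through the list and exhibit arbitrarily late occurrences in $K$ of each finite factor. The recurring structure $K=AA\,X_1Y_1Z_1\,X_2Y_2Z_2\,\ldots$ with $X_k=U^kC_kU^k$, $Y_k=V^kD_kV^k$, $Z_k=U^kE_kW^k$ immediately supplies arbitrarily long runs $U^k,V^k,W^k$, handling (1); the boundaries $X_k|Y_k$, $Y_k|Z_k$, $Z_k|X_{k+1}$ produce the single-transition patterns $U^kV^k$, $V^kU^k$, $W^kU^k$, handling (2); arbitrarily long $U$-, $V$- or $W$-runs on either side of a single $B$ taken from some $C_k$, $D_k$ or $E_k$ handle (3); and because the listings $\{C_k\}$, $\{D_k\}$, $\{E_k\}$ enumerate $\{U_i\}$, $\{V_i\}$, $\{\hat U_i\}$ infinitely often, each complete sandwich $U^kU_iU^k$, $V^kV_iV^k$, $U^k\hat U_iW^k$ appears infinitely often in $K$, handling (4).

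The substantive direction is the converse. The decisive combinatorial fact is that $A=10^r1$ contains a $0$-run strictly longer than any $0$-run that can arise from a concatenation of copies of $B,U,V,W$, while $A$ itself occurs in $K$ only inside the prefix $AA$; consequently no infinitely recurring factor of $K$ meets $A$, and every such factor parses uniquely as a word in the alphabet of $B$ together with the maximal monochromatic runs $U^n,V^n,W^n$. A direct inspection of $K$ then reveals severely restricted adjacencies: $V$ and $W$ never abut; the only direct transitions between distinct letters of $\{U,V,W\}$ are $U\to V$ at an $X_k|Y_k$ boundary, $V\to U$ at a $Y_k|Z_k$ boundary, and $W\to U$ at a $Z_k|X_{k+1}$ boundary; and every $B$ in $K$ sits inside some $C_k=U_j$, $D_k=V_j$ or $E_k=\hat U_j$, whose $B$-separating exponents are prescribed by the tree-label sequences $\lmb_j$ or $\gm_j$.

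These constraints drive a case analysis on the letters and $B$-count of $\Fi_f(x)$. If no $B$ appears then either $\Fi_f(x)$ is constant (giving~(1)) or it has exactly one direct transition between two letters of $\{U,V,W\}$ (giving~(2)). If a $B$ appears then $\Fi_f(x)$ is confined to a single block type. The delicate step, which I expect to be the main obstacle, is to show that the $B$-pattern of $\Fi_f(x)$ is finite. Here the bijectivity of the labelings $\phi_\alp$ and $\phi_\bt$ is crucial: every right- or left-going infinite block of $B$-gaps would trace out, via these bijections, a unique infinite descending chain of nodes in $\C T_\alp$ or $\C T_\bt$, contradicting well-foundedness. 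Once finiteness of the $B$-pattern is in hand, the arbitrariness of the monochromatic padding on each side forces the window to be pinned against a leading and trailing long $U^k$, $V^k$, or $W^k$ run of some $X_k$, $Y_k$ or $Z_k$; the only $B$-containing factors of $K$ compatible with such padding are a single $B$ (giving~(3)) or a complete word $U_j, V_j, \hat U_j$ (giving~(4)).
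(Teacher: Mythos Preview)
Your proposal is correct and follows essentially the same approach as the paper: both recast the problem as identifying bi-infinite words whose every finite factor recurs infinitely often in $K$, observe that such factors must lie in the post-$AA$ blocks $W^{k-1}U^kC_kU^kV^kD_kV^kU^kE_kW^kU^{k+1}$, and then use the bijectivity of $\phi_\alp,\phi_\bt$ together with well-foundedness of $\C T_\alp,\C T_\bt$ to force finitely many $B$s and hence one of the listed forms. Your write-up is, if anything, more explicit than the paper's about the role of $A$ and the block adjacencies; the only phrase to tighten is ``parses uniquely,'' since at the $0$--$1$ level the letters $U,V,W,B$ may overlap, but your argument (like the paper's) really only uses the well-defined parsing inherited from positions in $K$, not an intrinsic parsing of the factor itself.
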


\begin{proof} The full itineraries of points in $\C I_f$ are exactly those bi-infinite strings
for which every central segments occurs infinitely often in $K$.
Since each $U_i$, $V_i$ and $\hat{U}_i$ occurs infinitely often in the
enumeration of the $C_k$, $D_k$ and $E_k$, this means that all full
itineraries arise from central segments that occur in the words
$$
W^{k-1}U^kC_kU^kV^kD_kV^kU^kE_kW^kU^{k+1}
$$
as $k\to \infty$.

Clearly the terms $W^{k-1}U^k$, $U^kV^k$, and $V^kU^k$ give rise to
$W^{-\infty}.U^\infty$, $U^{-\infty}.V^\infty$,
$V^{-\infty}.U^\infty$ $U^\Z$, $V^\Z$ and $W^\Z$. Since $k\to
\infty$, any other full itinerary must arise from a subsequence of
the terms $(U^kC_kU^k)_{k\in\N}$ or from a subsequence of
$(V^kD_kV^k)_{k\in \N}$ or from a subsequence of $(U^kE_kW^k)_{k\in \N}$.

Let us consider the full itineraries that we obtain from the
$(U^kC_kU^k)_{k\in \N}$. Since every path in $\C T_\alp$ is finite, if
$t_n\lessdot\dots\lessdot t_1$ is a maximal path, then there is no
$\lmb\in\Lambda$ which extends the path-label
$\big(\phi_\alp(t_1),\dots,\phi_\alp(t_n)\big)$. This implies that
the number of times $B$ occurs in any full itinerary is finite.  Suppose that some full itinerary has at least two $B$s.  Then this full itinerary either has $BU^pB$ or $BV^pB$ for some even $p$ or $BU^qB$ for some odd $q$.
First suppose we are in the case that the full itinerary contains $BU^pB$ and $p$ is even. Since $\phi_\alp$ is a bijection onto the set of even integers, this uniquely identifies a node, $t_p$, in the tree $\C T_\alp$.  Hence there is a path in $\C T_\alp$ that contains $t_p$ and has path-label $\lmb_i=(\lmb_{i1},\lmb_{i2}, \dots \lmb_{i n_i})$ such that this full itinerary is of the form
$$U^{-\infty}BU^{\lmb_{i1}}BU^{\lmb_{i2}}B\dots BU^{\lmb_{in_i}}BU^\infty.$$
Moreover, for each path-label $\lmb_i=(\lmb_{i1},\dots ,\lmb_{in_i})\in\Lambda$ we get such a full itinerary.  It is clear that the only new full itineraries formed as
$i\to \infty$ are $U^{-\infty}BU^\infty$ and $U^\Z$.

If instead we are in the case that $BV^pB$ occurs in the full itinerary with $p$ even, an exactly similar argument shows that it is of the form
$$V^{-\infty}BV^{\lmb_{i1}}BV^{\lmb_{i2}}B\dots BV^{\lmb_{in_i}}BV^\infty,$$
for some $\lmb_i\in\Lambda$.  Then as $i\to \infty$ we get the limit points $V^{-\infty}BV^\infty$ and $V^\Z$.

Finally, if we are in the case that $BU^qB$ occurs in the full itinerary with $q$ odd, then since $\phi_\beta$ is a bijection from the tree $\C T_\beta$ onto the odd integers, this uniquely identifies a node $t_q$ in $\C T_\beta$.  A similar argument shows that the full itinerary is of the form
$$U^{-\infty}BU^{\gm_{im_i}}BU^{\gm_{i2}}B\dots BU^{\gm_{i1}}BW^\infty,$$
for some $\gm_i=(\gm_{i1},\dots ,\gm_{im_i})\in\Gamma$.  Again as $i\to \infty$ the limits that occur are $U^{-\infty}BW^\infty$, $U^\Z$ and $W^\Z$.
\end{proof}

The following claim is now immediate.

\begin{claim}\label{itinwcf} The right itineraries corresponding to points $x\in \w(c_f)$ are the following and all of their shifts:
\begin{enumerate}
\item $U^\infty$, $V^\infty$ and $W^\infty$;
\item $W^{n}U^\infty$, $U^{n}V^\infty$ and
$V^{n}U^\infty$, for each $n\in \N$;
\item $U^{n}BU^\infty$, $V^{n}BV^\infty$ and
$U^{n}BW^\infty$, for each $n\in\N$;
\item \begin{enumerate}
\item $U^{n}BU^{\lmb_{i1}}BU^{\lmb_{i2}}B\dots BU^{\lmb_{in_i}}BU^\infty$,
for each $\lmb_i\in\Lambda$, $n\in\N$,
\item $V^{n}BV^{\lmb_{i1}}BV^{\lmb_{i2}}B\dots BV^{\lmb_{in_i}}BV^\infty$,
for each $\lmb_i\in\Lambda$, $n\in\N$,
\item $U^{n}BU^{\gm_{im_i}}BU^{\gm_{i2}}B\dots BU^{\gm_{i1}}BW^\infty$,
for each $\gm_i\in\Gamma$, $n\in\N$.
\end{enumerate}
\end{enumerate}
\end{claim}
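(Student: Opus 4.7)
The plan is to deduce this claim directly from Claim \ref{biinf f} via the projection $\pi_0\colon\C I_f\to[0,1]$. Because $\w$-limit sets are strongly invariant under the map (that is, $f(\w(c_f))=\w(c_f)$), the restriction $f|_{\w(c_f)}$ is surjective onto $\w(c_f)$; hence $\pi_0$ carries $\C I_f=\invlim{\w(c_f)}{f|_{\w(c_f)}}$ onto $\w(c_f)$. For each $x\in\w(c_f)$, choose any preimage under $\pi_0$: its full itinerary restricted to the nonnegative indices, i.e.\ the part to the right of the dot, is precisely $I_f(x)$. Consequently, the forward itineraries of points of $\w(c_f)$ are exactly the right-hand halves of the full itineraries of points of $\C I_f$.

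Moreover, $\C I_f$ is invariant under the induced shift homeomorphism on $\ilim f$, whose action on full itineraries is simply to translate the dot by one position. Hence every dot-translate of a sequence enumerated in Claim \ref{biinf f} also occurs as the full itinerary of some point of $\C I_f$, and as the dot ranges over every position of every such sequence the resulting right halves produce every forward itinerary of every point of $\w(c_f)$. The remainder of the proof is then the routine bookkeeping of reading off these right halves family by family.

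Sliding the dot through $U^\Z$, $V^\Z$, and $W^\Z$ yields $U^\infty$, $V^\infty$, and $W^\infty$ together with their symbol-shifts, giving family (1) of Claim \ref{itinwcf}. For a sequence such as $W^{-\infty}.U^\infty$, sliding the dot $n$ copies of $W$ leftward through the $W$-block produces the right half $W^n U^\infty$; intermediate dot positions within a single copy of $W$ produce shifts of these one-sided sequences, and positions in the $U$-tail produce only shifts of $U^\infty$. Applying the analogous analysis to the remaining sequences of Claim \ref{biinf f}(2), (3), and (4) yields families (2), (3), and (4) of Claim \ref{itinwcf} respectively. No dot position is missed and no sequence is spurious, thanks to the periodicity of $U^\infty$, $V^\infty$, and $W^\infty$ and the unambiguous placement of the left-hand block in each listed bi-infinite sequence, so the enumeration is exhaustive and the claim follows. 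The only mild care required is in this final step; there is no genuine obstacle beyond clerical verification.
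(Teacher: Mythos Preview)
Your proposal is correct and follows exactly the approach the paper intends: the paper simply states that Claim \ref{itinwcf} is ``now immediate'' from Claim \ref{biinf f}, and your argument---reading off right halves of the bi-infinite itineraries via the surjection $\pi_0\colon\C I_f\to\w(c_f)$ and shift-invariance of $\C I_f$---is precisely the routine verification that makes this immediacy explicit.
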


For finite words $A$ and $B$ we say $A\sim B$ if some shift of
$A^\Z$ is equal to a shift of $B^\Z$.

\begin{claim}\label{height w(c)}
$\w(c_f)\simeq\w^{\alp+1}\cdot n+1$ where  $n=|U|$ if $U\sim V$ and
$n=|U|+|V|$ otherwise.
\end{claim}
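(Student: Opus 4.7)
The plan is to read the scattered height structure of $\w(c_f)$ directly off the itinerary list in Claim \ref{itinwcf}, using Lemmas \ref{epsfirst}--\ref{intfirst} to transfer itinerary-proximity to proximity in $[0,1]$. Since that generator list is countable and $\w(c_f)$ is closed under the shift, $\w(c_f)$ is a countable compact metric space, hence scattered. By the classification recalled in Section \ref{count}, $\w(c_f)\homeo\w^\gm\cdot N+1$ for some countable $\gm$ and integer $N$, and it suffices to pin down $\gm=\alp+1$ and $N=n$.

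Next I would associate to each node $t=t_k\ld\cdots\ld t_1\ld\sartre$ of $\C T_\alp$ (including the root) the itinerary
$$I_t^{(n)}=U^n B U^{\phi_\alp(t_1)} B \cdots B U^{\phi_\alp(t_k)} B U^\infty,$$
with $I_\sartre^{(n)}=U^n B U^\infty$ when $k=0$. This exhausts the Type (3) itinerary $U^nBU^\infty$ at the root together with all Type (4)(a) itineraries at deeper nodes. A straightforward induction on $\hi(t)$ (in $\C T_\alp$), using that $I_t^{(n)}$ is the limit of $I_{t'}^{(n)}$ as $t'$ runs over the children of $t$ with $\phi_\alp(t')\to\infty$, shows that the scattered height of $I_t^{(n)}$ in $\w(c_f)$ equals $\hi(t)$; a symmetric analysis with $V$ in place of $U$ handles Type (4)(b).

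The decisive step concerns Type (4)(c). There the path is listed in the \emph{reversed} order $U^{\gm_{im_i}}B\cdots BU^{\gm_{i1}}$, so extending the path by a deeper child $s'$ of the bottom node $s$ inserts the new block $BU^{\phi_\bt(s')}$ immediately after the initial $U^nB$; the extension and the original already disagree at position $n+2$ rather than in a distant tail. A cylinder-based case-analysis on shifts of all generators then shows that every Type (4)(c) itinerary is isolated in $\w(c_f)$, as are the Type (3) itinerary $U^nBW^\infty$ and, assuming $U\not\sim V$, the Type (2) itineraries $W^nU^\infty$, $U^nV^\infty$ and $V^nU^\infty$ (the case $U\sim V$ merely identifies some of these shift-orbits). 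In particular the whole $\C T_\bt$-structure, together with the $W$-cycle, contributes only isolated points above each $U^nB$.

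Consequently $U^\infty$ is the limit of the height-$\alp$ sequence $\{U^nBU^\infty\}_n$, and no sequence of strictly greater scattered height converges to it, so $U^\infty$ has scattered height exactly $\alp+1$; the same holds for $V^\infty$ and for each of their shifts. By contrast $W^\infty$ is a limit only of the isolated sequence $\{W^mU^\infty\}_m$ and so has scattered height $1\le\alp$. The top-level count is therefore $|U|$ if $U\sim V$ (when the shift-orbits of $U^\infty$ and $V^\infty$ coincide) and $|U|+|V|$ otherwise, and the classification then yields $\w(c_f)\homeo\w^{\alp+1}\cdot n+1$. The main obstacle is the bookkeeping in the third step: one has to verify that no shift of any generator in Claim \ref{itinwcf} produces a genuinely new distinct itinerary agreeing with a given Type (4)(c) itinerary on a long initial segment, i.e.\ that the reversed ordering really is enough to prevent the height of $\C T_\bt$ from leaking into the scattered height of $\w(c_f)$.
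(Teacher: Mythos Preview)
Your argument is correct and follows essentially the paper's route: both run an induction along $\C T_\alp$ showing that the scattered height of $FBU^{p}BU^\infty$ (equivalently your $I_t^{(n)}$) equals the rank of the corresponding node, and both then deduce that $U^\infty$, $V^\infty$ and their shifts sit at height $\alp+1$ while $W^\infty$ has height~$1$.

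The only real difference is your treatment of Type~(4)(c). You identify the reversed ordering of the $\gm$-labels as the mechanism preventing tree-style convergence among these points, and then defer to a cylinder case-analysis over all generators --- the bookkeeping you flag as the main obstacle. The paper sidesteps this entirely with a one-line observation: from the list in Claim~\ref{itinwcf}, the only itineraries containing the block pattern $BW$ are those that end $BW^\infty$. Hence once a prefix of a Type~(4)(c) itinerary (or of $U^nBW^\infty$) is long enough to reach the final $BW$, nothing else in $\w(c_f)$ shares that prefix, and isolation is immediate. Your reversed-ordering remark is the \emph{design} reason the $\C T_\bt$-structure cannot leak into the height of $\w(c_f)$; the paper's $BW$-tail observation is the short \emph{proof} that it doesn't, and it dissolves what you identified as the chief difficulty.
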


\begin{proof} Notice that the only itineraries of the form $WU^kD$, $UV^kD$, $VU^kD$
and $BW^kD$, for some $D$, are $WU^\infty$, $WU^\infty$,
$UV^\infty$, $V^\infty$ and $BW^\infty$.  Therefore, $W^{n}U^\infty$,
$U^{n}V^\infty$ $V^{n}U^\infty$ are isolated for all $n>0$.
Similarly, any word ending $BW^\infty$ is isolated.  In particular,
$$U^{n}BU^{\gm_{im_i}}BU^{\gm_{i2}}B\dots BU^{\gm_{i1}}BW^\infty$$ is isolated for any
$\gm_i\in\Gamma$ and $n\in\N$.

Every itinerary of the form $W^kD$ is either of the form
$W^nU^\infty$ or $W^\infty$. Hence $W^\infty$ has limit type $1$.

The itinerary $U^\infty$ is a limit of the sequences $(U^nBU^\infty)_{n\in \N}$ and
$(U^nBW^\infty)_{n\in \N}$ and so the limit type of $U^\infty$ is determined
by the limit types of the points $U^nBU^\infty$ and $U^nBW^\infty$.
Similarly the limit type of $V^\infty$ is determined by the limit
types of the points $V^nBV^\infty$.

We claim that for each $n\ge0$, $U^nBU^\infty$ and $V^nBV^\infty$
have limit type $\alp$. Since each $U^nBW^\infty$ is isolated, this
then implies that $U^\infty$ and $V^\infty$ both have limit type
$\alp+1$.

So consider $U^nBU^\infty$ for some $n\ge0$ (the argument for
$V^nBV^\infty$ is identical). This is a limit of points of the form
(4a) and of points of the form (4c) and their shifts from Claim \ref{itinwcf}. Since points
of the form (4c) are all isolated, we only need to consider points
of the form (4a) that contain two instances of the word $B$ (since
otherwise we are of the form $U^jBU^\infty$). We will show that if
$p\in\N$ then the limit type of any point with right itinerary of
the form $FBU^{p}BU^\infty$, for some finite word $F$, is equal to
the rank of the node $t_p=\phi_\alp^{-1}(p)$ in the tree $\C
T_\alp$. This is enough since node $(\,)$ in the tree $\C T_\alp$
has rank $\alp$, which then implies that $U^nBU^\infty$ has limit
type $\alp$.

The proof is completed easily by induction. For any $\lmb_{ij}$, let
$t_{ij}$ be the node in the tree $\C T_\alp$ such that $\phi_\alp(t_{ij})=\lmb_{ij}$. Note that
the natural number denoted $\lmb_{ij}$ corresponds uniquely to
$t_{ij}$ (even though this natural number may be listed as a label
in path-labels $\lmb_i$ for different $i\in\N$). Suppose that
$t_p=t_{in_i}$ is a terminal node in $\C T_\alp$, and hence has rank
$0$. Then no path-label in $\Lambda$ extends the finite sequence
$\lmb_i=(\lmb_{i1},\dots\lmb_{in_i})$ and so the only point with
right itinerary of the form $FBU^{\lmb_{in_i}}BU^rG$, where $r>0$
and $G$ is a right infinite word, is $FBU^{\lmb_{in_i}}BU^\infty$
itself. Hence this point is isolated and has limit type $0$. Now
suppose that $t_p$ is not a terminal node. Then there is an infinite
set $N\subseteq \N$ such that $t_p$ occurs as the penultimate label
$t_{i(n_{i}-1)}$ in the path-label $\lmb_i$ for each $i\in N$. By
induction, each point with right itinerary
$$
FBU^{p}BU^{\lmb_{in_i}}BU^\infty
=FBU^{\lmb_{i n_i-1}}BU^{\lmb_{in_i}}BU^\infty
$$
has limit type equal to the rank of $t_{in_i}$. Since these are
precisely the nodes in $\C T_\alp$ that are below $t_p$, the limit
type of $FBU^pBU^\infty$ is the same as the rank of $t_p$ and we are
done.
\end{proof}

\begin{claim}\label{height I} If $\alp<\bt$, then $\C I_f\simeq \w^{\bt+1}\cdot m+1$, where $m=|U|$ if $U\sim W$ and $m=|U|+|W|$ otherwise.
If $\alp=\bt$, then then $\C I_f\simeq \w^{\bt+1}\cdot m+1$ where
$$m=\left\{
              \begin{array}{ll}
                |U|, & \hbox{$U\sim V\sim W$;} \\
                |U|+|V|, & \hbox{$U\sim W\text{ or } V\sim W$;} \\
                |U|+|W|, & \hbox{$U\sim V\text{ or } W\sim V$;} \\
                |V|+|W|, & \hbox{$V\sim U \text{ or } W\sim U$;} \\
                |U|+|V|+|W|, & \hbox{otherwise.}
              \end{array}
            \right.$$
\end{claim}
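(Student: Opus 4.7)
The plan is to compute the limit type of each bi-infinite itinerary listed in Claim \ref{biinf f}, paralleling the analysis of right-infinite itineraries in the proof of Claim \ref{height w(c)}. By the lemmas of Section \ref{prelims} (originally Lemmas 2.5 and 2.6 of \cite{goodraines}), the metric topology on $\C I_f$ is determined by the central-cylinder topology on full itineraries: two points are close precisely when their full itineraries agree on a long central segment around the dot. Consequently, to compute the limit type of a point in $\C I_f$, we need only determine which other bi-infinite itineraries from Claim \ref{biinf f} admit it as a central-cylinder limit, and recurse.

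First I would treat the tree-labelled family (4). The inductive argument of Claim \ref{height w(c)} transfers directly to show that the point with full itinerary $U^{-\infty}BU^{\lmb_{i1}}B\cdots BU^{\lmb_{ij}}B.U^\infty$ has limit type equal to the rank of $\phi_\alp^{-1}(\lmb_{ij})$ in $\C T_\alp$, because its neighbours in $\C I_f$ are exactly those obtained by extending the path-label past $\lmb_{ij}$ within $\C T_\alp$. The same induction handles the $V$-analogue via $\C T_\alp$, while a parallel induction on $\C T_\bt$ handles the $W$-branch $U^{-\infty}BU^{\gm_{im_i}}B\cdots BU^{\gm_{i1}}B.W^\infty$. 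Taking limits over each tree, the transitional itineraries $U^{-\infty}.BU^\infty$ and $V^{-\infty}.BV^\infty$ (corresponding to the root $\sartre$ of $\C T_\alp$) have limit type $\alp$, while $U^{-\infty}.BW^\infty$ (root of $\C T_\bt$) has limit type $\bt$. The itineraries of family (2), which contain no occurrence of $B$, are isolated in $\C I_f$.

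Second, I would identify the top-level points. Each shift of $U^\Z$ is a limit of shifts of $U^{-\infty}.BW^\infty$: moving the dot leftwards through the $U^{-\infty}$ tail produces full itineraries with arbitrarily long central blocks of $U$'s, which converge to that shift of $U^\Z$. Since the approximating points have limit type $\bt$ by the first step, each shift of $U^\Z$ has limit type at least $\bt+1$; the matching upper bound follows because Claim \ref{biinf f} shows nothing else accumulates there. A symmetric argument, moving the dot rightwards to produce long central $W$-blocks, gives the same for each shift of $W^\Z$. Shifts of $V^\Z$ can only be approached by $V$-based sequences from families (2)--(4), whose limit types cap at $\alp$, so each shift of $V^\Z$ has limit type exactly $\alp+1$. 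Hence when $\alp<\bt$ only the $U^\Z$ and $W^\Z$ shifts attain the maximum $\bt+1$; when $\alp=\bt$ all three periodic families do.

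The final step is counting shift orbits at the top. Each primary word $U$, $V$, $W$ produces $|U|$, $|V|$, $|W|$ distinct bi-infinite shifts respectively, and an equivalence $U\sim V$, $U\sim W$, or $V\sim W$ identifies the corresponding orbits. This yields exactly the tabulated values of $m$: in the case $\alp<\bt$, only the $U$ and $W$ orbits sit at the top, giving $m=|U|$ when they coincide and $m=|U|+|W|$ otherwise; in the case $\alp=\bt$ the five sub-cases enumerate every possible coalescence pattern among $\{U,V,W\}$. The main obstacle I anticipate is not any novel computation (the tree induction is essentially that of Claim \ref{height w(c)}) but the careful verification that beneath each top-level shift orbit the family of accumulating points forms a single copy of $\w^{\bt+1}$, so that the full space is homeomorphic to $\w^{\bt+1}\cdot m+1$.
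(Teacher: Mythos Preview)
Your approach is essentially the paper's: run the tree induction from Claim \ref{height w(c)} on the bi-infinite itineraries of Claim \ref{biinf f}, determine which periodic orbits sit at the top, and count shift classes modulo the $\sim$-relation.

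There is one slip, however. You assign $U^{-\infty}.BU^\infty$ limit type $\alp$, treating it purely as the root of the $\lambda$-family coming from $\C T_\alp$. But this itinerary is \emph{also} a limit of the $\gamma$-family $U^{-\infty}BU^{\gm_{im_i}}B\cdots BU^{\gm_{i1}}B.W^\infty$: centring the dot at the leftmost $B$ and letting $\gm_{im_i}\to\infty$ matches $U^{-\infty}.BU^\infty$ on arbitrarily long central windows, and those $\gamma$-points carry ranks from $\C T_\bt$. Hence $U^{-\infty}.BU^\infty$ has limit type $\bt$, not $\alp$, exactly as the paper records. This does not upset your identification of the top level---shifts of $U^\Z$ and $W^\Z$ still have type $\bt+1$ and $V^\Z$ still has type $\alp+1$---but the intermediate claim should be corrected, and your upper-bound argument for $U^\Z$ (``nothing else accumulates there'') needs to acknowledge that shifts of $U^{-\infty}.BU^\infty$ do accumulate there as well, with limit type $\bt$ rather than $\alp$.
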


\begin{proof} If a full itinerary contains two $B$s then it must contain $BU^pB$, $BV^pB$ for some even integer $p$ or $BU^qB$ for some odd integer $q$.  (Recall that we labeled the nodes of the tree $\C T_\alpha$ with the even integers while we labeled the nodes of the tree $\C T_\bt$ with odd integers.)

Arguing again by induction,
as in the proof of Claim \ref{height w(c)}, one can easily show that
for any left infinite word $F$ and even integer $p$, the limit type of both points with
full itineraries either $FBU^{p}B.U^\infty$ or $FBV^{p}B.V^\infty$,
are both equal to the rank of the node $t_p=\phi_\alp^{-1}(p)$ in
$\C T_\alp$. The same is true of any shift of these points. We can
also see, by an argument identical to that in the proof of Claim \ref{height w(c)}, that for any right infinite word
$G$ and odd integer $q$, the limit type of points with full itinerary
$U^{-\infty}BU^{q}BG$ is equal to the rank of the node
$\phi_\bt^{-1}(q)$ in $\C T_\bt$.

The points with full itineraries $W^{-\infty}.U^\infty$,
$U^{-\infty}.V^\infty$ and $V^{-\infty}.U^\infty$ are all isolated
since, for example, no other full itineraries have the form $\dots
W.U\dots$.

Points whose full itinerary is a shift of $V^{-\infty}.BV^\infty$ are
limits of points with itineraries that are shifts of full itineraries
of the form $$V^{-\infty}BV^{\lmb_{i1}}BV^{\lmb_{i2}}B\dots
BV^{\lmb_{in_i}}B.V^\infty.$$ Hence points corresponding to shifts of $V^{-\infty}.BV^\infty$ have limit type $\alp$.

Points whose full itinerary is a shift of
$U^{-\infty}.BW^\infty$ and $U^{-\infty}.BU^\infty$ are limits of
points with full itineraries that are shifts of
$$U^{-\infty}BU^{\gm_{im_i}}BU^{\gm_{i2}}B\dots
BU^{\gm_{i1}}B.W^\infty.$$ Hence these points have limit type $\bt$.

It follows then that $V^\Z$ has limit type $\alp+1$, while $U^\Z$
and $W^\Z$ both have limit type $\bt+1$.
\end{proof}

Now we turn our attention to the map $g$ with kneading sequence $K'$.  The next four claims follow by arguments similar to the proofs of Claims \ref{biinf f}, \ref{itinwcf}, \ref{height w(c)} and \ref{height I}.

\begin{claim} The full itineraries that correspond to points in $\C I_g$ are the following and all of their shifts:
\begin{enumerate}
\item $U^\Z$ and $W^\Z$;
\item $W^{-\infty}.U^\infty$ and $U^{-\infty}.W^\infty$;
\item $U^{-\infty}.BU^\infty$ and  $W^{-\infty}.BW^\infty$;
\item $U^{-\infty}BU^{\lmb_{i1}}BU^{\lmb_{i2}}B\dots BU^{\lmb_{in_i}}B.U^\infty$ for each $\lmb_i\in \Lambda$
and\\
$W^{-\infty}BW^{\gm_{im_i}}BW^{\gm_{i2}}B\dots
BW^{\gm_{i1}}B.W^\infty$, for each $\gm_i\in\Gamma$.
\end{enumerate}
\end{claim}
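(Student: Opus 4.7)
The plan is to adapt the proof of Claim \ref{biinf f} to the simpler combinatorics of $K'$. The first step is to re-derive the characterization of full itineraries in $\C I_g$: since $\C I_g = \invlim{\w(c_g)}{g|_{\w(c_g)}}$ and $g$ is l.e.o., a bi-infinite sequence $\Fi_g(x)$ (containing no $*$) lies in $\C I_g$ precisely when every one of its central segments occurs infinitely often in $K'$ (apply Theorem \ref{omegaident} separately to the left and right tails of $\Fi_g(x)$, using that $\w(c_g)$ is $g$-invariant).

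Next, I decompose $K'$ into its block structure. Since $\{C_k\}$ lists each $U_i$ infinitely often and $\{F_k\}$ lists each $W_i$ infinitely often, every central segment of a full itinerary in $\C I_g$ must occur inside a window
\[
W^{k-1}\,U^k C_k U^k\,W^k F_k W^k\,U^{k+1}
\]
for arbitrarily large $k$. The boundary strings $W^{k-1}U^k$, $U^kW^k$, and $W^kU^{k+1}$, together with the unbounded growth of $U^k$ and $W^k$, generate precisely the full itineraries $U^\Z$, $W^\Z$, $W^{-\infty}.U^\infty$, and $U^{-\infty}.W^\infty$, establishing cases (1) and (2).

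All remaining full itineraries must arise from subsequences of $(U^kC_kU^k)_k$ or $(W^kF_kW^k)_k$, and these two cases are symmetric, governed by the trees $\C T_\alp$ and $\C T_\bt$ respectively. Consider the $U$-block case. Well-foundedness of $\C T_\alp$ forces each $U_i$, and hence any resulting full itinerary, to contain only finitely many $B$'s. If exactly one $B$ appears, the arbitrarily long $U^k$ context inside $K'$ forces both flanks to be $U^\infty$, producing a shift of $U^{-\infty}.BU^\infty$ and contributing to (3). If at least two $B$'s appear, the internal pattern $B U^{p_1} B \cdots B U^{p_\ell} B$ must be a consecutive sub-pattern of the exponent sequence of some $U_i$; I expect the main technical point to be ruling out \emph{proper} sub-patterns. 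The argument is that a proper sub-pattern of $U_i$ would be immediately preceded (respectively, followed) inside $C_k$ by the fixed-length run $U^{\lmb_{i,j-1}}$ (respectively, $U^{\lmb_{i,j+1}}$), and this finite context would persist for every occurrence of that sub-pattern arising from $U_i$; but the tails of the full itinerary must be $U^\infty$, a condition only compatible with the outermost $B$'s of the pattern coinciding with the outermost $B$'s of $U_i$ (which abut the $U^k$ flanks). Hence we obtain a shift of
\[
U^{-\infty}BU^{\lmb_{i1}}B\cdots BU^{\lmb_{in_i}}B.U^\infty
\]
for some $\lmb_i\in\Lambda$, giving the first family in (4). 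The $W$-block analysis, run verbatim with $\C T_\bt$, $\{W_i\}$, and $\Gamma$, produces the remaining items of (3) and (4).

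Note that, compared to Claim \ref{biinf f}, the distinction between the two trees is carried here by the letters $U$ versus $W$ rather than by the parity of exponents, so there is no need to verify parity bookkeeping; this is the only substantive simplification. The converse direction is routine: given any listed full itinerary and any $N$, choose $k>N$ with $C_k=U_i$ or $F_k=W_i$ as appropriate, and the central $N$-segment occurs centered on that block, surrounded by $U^k$ or $W^k$ flanks of more than sufficient length, so every central segment occurs infinitely often in $K'$ and the itinerary does belong to $\C I_g$.
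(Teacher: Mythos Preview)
Your proposal is correct and follows exactly the route the paper intends: the paper gives no separate proof of this claim, simply stating that it follows by the argument of Claim~\ref{biinf f}, and your adaptation of that argument to the window $W^{k-1}U^kC_kU^kW^kF_kW^kU^{k+1}$ is the right one, including your observation that the $U$/$W$ letter distinction replaces the even/odd parity bookkeeping. One small imprecision: in your sub-pattern argument the \emph{right} context of $BU^{\lmb_{ij}}B$ is not fixed (node $t_{\lmb_{ij}}$ may have many children), so only the left-context half of your argument actually forces the leftmost $B$ to be the first $B$ of some $U_i$; the right end takes care of itself because $\Lambda$ contains every root-to-node path, so any right truncation of a path-label is again some $\lmb_{i'}\in\Lambda$.
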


\begin{claim} The right itineraries corresponding to points $x\in \w(c_g)$ are the following and all of their shifts:
\begin{enumerate}
\item $U^\infty$ and $W^\infty$;
\item $W^nU^\infty$ and $U^nW^\infty$ for each $n\in \N$;
\item $U^nBU^\infty$ and  $W^nBW^\infty$ for each $n\in \N$;
\item $U^nBU^{\lmb_{i1}}BU^{\lmb_{i2}}B\dots BU^{\lmb_{in_i}}BU^\infty$ for each $\lmb_i\in \Lambda$, $n\in \N$
and\\
$W^nBW^{\gm_{im_i}}BW^{\gm_{i2}}B\dots
BW^{\gm_{i1}}BW^\infty$, for each $\gm_i\in\Gamma$, $n\in \N$.
\end{enumerate}
\end{claim}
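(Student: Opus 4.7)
The plan is to mimic the proof of Claim \ref{itinwcf} almost verbatim, exploiting the simpler combinatorics of $K'$: only $U$- and $W$-blocks occur, and both $X'_k = U^k C_k U^k$ and $Z'_k = W^k F_k W^k$ are symmetric in the sense that they are flanked by the same letter on either side of $C_k$ (respectively $F_k$). By Theorem \ref{omegaident}, a right-infinite sequence is the itinerary of a point of $\w(c_g)$ precisely when every finite prefix occurs infinitely often in $K' = I_g(g(c_g))$.

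First I would observe that $A = 10^r1$ contains a run of $r$ consecutive zeros strictly longer than any run appearing in a concatenation of $B$s, $U$s and $W$s, so cylinders containing that long run occur only twice in $K'$, namely in the initial $AA$. We may therefore restrict attention to the infinite tail $X'_1 Z'_1 X'_2 Z'_2 \dots$. Since each $U_i$ (respectively $W_i$) occurs as $C_k$ (respectively $F_k$) for infinitely many $k$, a finite word appears infinitely often in $K'$ iff it appears as a subword of $U^k U_i U^k$ for some $i$ and arbitrarily large $k$, or as a subword of $W^k W_i W^k$ analogously, or straddling one of the junctions $U^k W^k$ or $W^k U^{k+1}$.

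Next I would enumerate right itineraries by their starting location within this concatenation. Positions well inside a $U^k$ or $W^k$ run yield $U^\infty$ and $W^\infty$, giving case (1). Positions $n$ letters before a $U$-to-$W$ or $W$-to-$U$ junction, with $k$ large so that the following run is arbitrarily long, yield $U^n W^\infty$ and $W^n U^\infty$, giving case (2). Positions $n$ letters before the first or last $B$ of some $C_k$, with $k$ large so that the trailing $U^k$ contains arbitrarily many $U$s, yield $U^n B U^\infty$; the symmetric analysis on $F_k$ yields $W^n B W^\infty$, giving case (3). Finally, positions lying inside a $C_k = U_i$ that include all remaining $B$s of that block and the trailing $U^k$ produce the patterns of case (4) indexed by $\lmb_i \in \Lambda$, and the analogous analysis in $F_k = W_i$ produces those indexed by $\gm_i \in \Gamma$. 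Verifying in each case that every finite prefix occurs infinitely often reduces to choosing $k$ sufficiently large together with the appropriate $U_i$ or $W_i$, which is possible because each $U_i$ and $W_i$ appears infinitely often.

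The main obstacle, exactly as in Claim \ref{itinwcf}, will be to rule out any other right itinerary, especially any containing infinitely many $B$s. The key point is that $\C T_\alp$ and $\C T_\bt$ are well-founded, so infinitely many $B$s in a right itinerary would force an infinite descending chain of nodes under $\aux$, which is impossible. With only finitely many $B$s, the itinerary must match one of the listed forms, because within any $C_k$ every pair of consecutive $B$s is separated by a pure power of $U$ and the flanking letters are $U$s (and similarly for $F_k$ with $W$); in particular no right itinerary can mix $U$ and $W$ via a $B$, which accounts for the absence of any cross-type pattern analogous to (4c) of Claim \ref{itinwcf}. Closure under $\sigma$ finally justifies the phrase \emph{and all of their shifts}, since $\w(c_g)$ is $g$-invariant.
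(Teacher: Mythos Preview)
Your proposal is correct. The paper itself gives no detailed proof here, merely asserting that this claim (together with the three surrounding claims for $g$) follows by arguments analogous to those for $f$. In the paper's organization, the corresponding statement for $f$ (Claim \ref{itinwcf}) is obtained as an \emph{immediate} consequence of the preceding claim about full itineraries in $\C I_f$: one first identifies the bi-infinite itineraries and then reads off the right itineraries by projection. You instead work directly with right itineraries via Theorem \ref{omegaident}, analyzing which finite words occur infinitely often in $K'$. The two routes rest on the same combinatorial analysis of $K'$ and are essentially interchangeable; your direct route avoids first establishing the bi-infinite claim, at the minor cost of a little extra bookkeeping in verifying each listed itinerary and excluding the rest. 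One small inaccuracy worth noting: you refer to mimicking ``the proof of Claim \ref{itinwcf}'', but that claim carries no written proof in the paper---the detailed argument you are actually adapting is that of Claim \ref{biinf f}.
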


\begin{claim}\label{gheight w(c)}
  $\w(c_g)\simeq \w^{\alp+1}|U|+1$.
\end{claim}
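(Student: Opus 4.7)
The plan is to adapt the proof of Claim \ref{height w(c)} to the simpler setting of the kneading sequence $K'$, which involves only $U$- and $W$-blocks (no $V$). First I would invoke Claim 5.5 to enumerate the right itineraries of points in $\w(c_g)$: these split into the crossover itineraries $W^nU^\infty$ and $U^nW^\infty$ from family (2), together with two ``tree-based'' families, one built from $U$-blocks indexed by paths in $\C T_\alp$, and one built from $W$-blocks indexed by paths in $\C T_\bt$.

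First I would check that each crossover itinerary $W^nU^\infty$ or $U^nW^\infty$ is isolated in $\w(c_g)$: the adjacencies $WU$ and $UW$ do not appear anywhere else in the itinerary list, so the cylinder lemmas (Lemmas \ref{epsfirst} and \ref{intfirst}) let us separate these points from the rest using a sufficiently long common segment. Similarly, every right itinerary ending in a maximal path of $\C T_\alp$ is isolated, since no element of $\Lambda$ extends that path-label.

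The heart of the argument is the inductive identification of limit types with tree ranks, mirroring Claim \ref{height w(c)}. For each node $t_p \in \C T_\alp$ with $p = \phi_\alp(t_p)$ and each finite left word $F$, I would show by induction on $\hi(t_p)$ that the right itinerary $FBU^pBU^\infty$ has limit type exactly $\hi(t_p)$. The base case (terminal $t_p$, so $\hi(t_p)=0$) gives an isolated point because no path-label in $\Lambda$ extends the current prefix. For the inductive step, at a non-terminal node of rank $\rho$, the itinerary is the limit of the itineraries $FBU^pBU^{\lmb_{i,n_i}}BU^\infty$ indexed by the children of $t_p$ in $\C T_\alp$, whose limit types (by induction) exhaust every rank strictly less than $\rho$. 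Applying this at the root $(\,)$, which has rank $\alp$, shows that each $U^nBU^\infty$ has limit type $\alp$; since $U^\infty$ is the limit of the sequence $(U^nBU^\infty)_{n\in\N}$ together with the isolated points $U^nW^\infty$, it has limit type $\alp+1$.

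To finish I would count: the shift orbit of $U^\infty$ has exactly $|U|$ elements (taking $U$ to be primitive, which we may assume by the construction), and these give the multiplier in $\w^{\alp+1}\cdot |U|+1$, while the extra $+1$ accounts for the rest of the countable scattered structure below. The main obstacle is the same bookkeeping issue as in Claim \ref{height w(c)}: verifying that the bijection $\phi_\alp$ lets the exponents $p$ of $U$-blocks in a right itinerary be read off unambiguously as nodes of $\C T_\alp$, so that the tree induction really computes the Cantor--Bendixson rank. This is secured by the choice of the marker $A=10^r1$ with $r$ strictly larger than every run of $0$s appearing in concatenations of $B$, $U$, $V$, $W$, which makes the $B$-markers unambiguous and ensures the prefixes of right itineraries faithfully encode paths through the tree.
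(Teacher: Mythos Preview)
Your treatment of the $U$-family is correct and mirrors Claim~\ref{height w(c)} faithfully: the tree induction shows $U^nBU^\infty$ has limit type $\alp$ and hence $U^\infty$ has limit type $\alp+1$. The gap is that you never analyse the $W$-family, and this family does \emph{not} behave like a second copy of the tree induction with $\C T_\bt$ in place of $\C T_\alp$. If it did, $W^\infty$ would have limit type $\bt+1\ge\alp+1$ and the top level of $\w(c_g)$ would contain the shifts of $W^\infty$ as well, contradicting the claim whenever $U\not\sim W$.

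The point you are missing is that in the blocks $W_i=BW^{\gm_{im_i}}B\cdots BW^{\gm_{i1}}B$ the exponents appear in the \emph{reversed} (leaf-to-root) order. Consequently, in a right itinerary $W^nBW^qB\cdots$ of type (4), the first exponent $q$ after $W^nB$ is the label of the \emph{endpoint} of the path $\gm_i$, and since every node of $\C T_\bt$ has a unique path to the root, this single exponent determines the entire remaining itinerary. Hence every type-(4) $W$-itinerary is isolated; then $W^nBW^\infty$ is a limit only of isolated points (so has limit type $1$), and $W^\infty$ is a limit of the $W^nBW^\infty$ together with the isolated $W^nU^\infty$, giving $W^\infty$ limit type $2$. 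Since Theorem~\ref{UW} assumes $\alp\ge 2$, this places $W^\infty$ strictly below the top level, and the $|U|$ shifts of $U^\infty$ are indeed the only points of limit type $\alp+1$. You need to add this argument (or at least the reversed-order observation) to complete the proof.
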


\begin{claim}\label{gheight I}
  If $\alp<\bt$, then $\C I_g\simeq \w^{\bt+1}|W|+1$.  If $\alp=\bt$ then $\C I_g\simeq \w^{\bt+1}\cdot m+1$ where $m=|U|$ if $U\sim W$ and $m=|U|
+|W|$ otherwise.
\end{claim}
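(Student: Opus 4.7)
The plan is to mirror the argument of Claim \ref{height I}, which is cleaner here because the kneading sequence $K'$ involves only $U$ and $W$ (no $V$). First I would establish which of the full itineraries listed before the claim are isolated. As in Claim \ref{height I}, the type (2) points $W^{-\infty}.U^\infty$ and $U^{-\infty}.W^\infty$ (and all shifts) are isolated because no other full itinerary of $\C I_g$ contains the central block $W.U$ or $U.W$; indeed, every other itinerary either has $U.U$, $W.W$, or contains at least one $B$ separating long $U$- and $W$-runs.

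Second, I would run the tree induction: for any left-infinite word $F$ and even integer $p$ with $t_p=\phi_\alp^{-1}(p)$, the full itinerary $FBU^pB.U^\infty$ has limit type equal to $\hi(t_p)$ in $\C T_\alp$, and for any right-infinite word $G$ and odd integer $q$ with $t_q=\phi_\bt^{-1}(q)$, the full itinerary $W^{-\infty}.BW^qBG$ has limit type equal to $\hi(t_q)$ in $\C T_\bt$. The argument is the same induction used at the end of Claim \ref{height w(c)} and in Claim \ref{height I}: terminal nodes correspond to isolated points because no $\lmb_i\in\Lambda$ (resp.\ $\gm_i\in\Gamma$) extends the identifying label, and for a non-terminal $t_p$ of rank $\eta$, the itineraries that accumulate on $FBU^pB.U^\infty$ are exactly those of the form $FBU^pBU^{\lmb_{ij}}B\dots B.U^\infty$ where $\lmb_{ij}$ ranges over the labels of the children of $t_p$, and these already have limit type equal to $\hi(\text{child})$ by induction. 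Applying this to the root $\sartre$ shows that $U^{-\infty}.BU^\infty$ has limit type $\alp$ and $W^{-\infty}.BW^\infty$ has limit type $\bt$, and consequently $U^\Z$ has limit type $\alp+1$ and $W^\Z$ has limit type $\bt+1$.

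Third, I would count the points at the maximal limit type. If $\alp<\bt$, then only the shifts of $W^\Z$ have limit type $\bt+1$, and the shift orbit of $W^\Z$ has exactly $|W|$ distinct elements, giving $\C I_g\simeq\w^{\bt+1}\cdot|W|+1$. If $\alp=\bt$, both $U^\Z$ and $W^\Z$ have limit type $\bt+1$. When $U\sim W$, some shift of $U^\Z$ equals some shift of $W^\Z$, so the two orbits coincide and contribute $|U|=|W|$ points; otherwise the orbits are disjoint and contribute $|U|+|W|$ points.

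The main obstacle is the bookkeeping in the third step: one must verify that no two shifts of $U^\Z$ and $W^\Z$ are identified \emph{unless} $U\sim W$, and that within a single such orbit the number of distinct bi-infinite sequences is precisely $|U|$ (resp.\ $|W|$). Both facts follow from the observation that $U^\Z$ is periodic under $\hat\sh$ with period exactly $|U|$ (since $U$ contains at least one $1$ and the block $A=10^r1$ does not appear in $U^\Z$), and likewise for $W^\Z$, so $\sim$ is exactly the condition for their shift orbits to coincide.
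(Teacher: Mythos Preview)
Your proposal is essentially the same as the paper's own approach: the paper simply states that Claims 5.7--5.10 follow by arguments analogous to Claims \ref{biinf f}--\ref{height I}, and you have correctly spelled out that analogy. The three-step structure (isolate the type-(2) itineraries, run the tree induction on the type-(4) itineraries to get $\lt(U^{-\infty}BU^\infty)=\alp$ and $\lt(W^{-\infty}BW^\infty)=\bt$, then count shift orbits at the top level) is exactly what Claim \ref{height I} does for $f$, transported to the simpler two-letter situation of $K'$.

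One small gap in your final paragraph: the assertion that $U^\Z$ has period \emph{exactly} $|U|$ under $\hat\sh$ does not follow from ``$U$ contains a $1$ and $A=10^r1$ does not occur in $U^\Z$''. Neither condition rules out $U$ itself being a proper power (e.g.\ $U=0101$ gives $U^\Z=(01)^\Z$ with period $2$, not $4$). The paper's statement of the claim tacitly assumes that $U$ and $W$ are primitive words, which is harmless because in the proof of Theorem \ref{UW} every specific choice of $U$ and $W$ (namely $1$, $01$, $001^{n-2}$, $01^{m-2}$, etc.) \emph{is} primitive. So either read $|U|$, $|W|$ in the claim as shorthand for the minimal periods of $U^\Z$, $W^\Z$, or note that the claim is only invoked for primitive words; your argument for when the two shift orbits coincide is then fine.
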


We end this section with proofs of Theorems \ref{UVW} \& \ref{UW}.

\begin{proof}[Proof of \ref{UVW}.]
By careful choice of $U$, $V$, and $W$ in the construction of $K$ outlined above, we obtain a kneading sequence for a tent map $f$ with the properties described in the theorem.

 For case (1) we have $\beta=1$.  If $n=1$ and $m\neq 2$, let $U=V=1$.
If also $m=1$ then let $W=1$.  If instead $m>2$, let $W=01^{m-2}$.
Then we see that $\w(c_f)\simeq \w^{2}\cdot +1$ and $\C I_f\simeq
\w^{\bt+1}\cdot m+1$ by Claims \ref{height w(c)} \& \ref{height I}
proving case (1.a). Similarly when $n=2$, let $U=V=01$.  If also
$m=2$ then let $W=01$. If instead $m=3$ then let $W=1$.  If $4<m$
then let $W=01^{m-3}$, which establishes case (1.b). For case (1.c),
$2<n$, let $U=V=01^{n-1}$. If $m=n$ let $W=U$.  Instead suppose that
$m>n$. If $m=n+1$ then let $W=1$.  If $m=n+2$ then let $W=01$.  If
$m>n+2$ then let $W=001^{m-n-2}$.

For case (2) let $1<\beta$.   If $n=1$, let $U=V=1$.  If $m=1$ then
let $W=U$.  If instead $m>2$ then let $W=01^{m-2}$.  This gives case
(2.a).  Let $n=2$ and $U=V=01$.  If $m=2$ let $W=U$.  If $m=3$ let
$W=1$.  If instead $m>4$ let $W=01^{m-3}$.  This gives case (2.b).

For case (2.c), let $n=4$.  If $m=1$ let $U=W=1$ and $V=011$.  If
instead $m=3$ let $U=W=011$ and $V=1$.  To finish the case, let
$U=V=0111$.  If $m=4$ then let $U=W$.  If $m=5$ let $W=1$, but if
$m=6$ let $W=01$.  Finally if $m\ge 7$ let $W=001^{m-n-2}$.

For case (2.d), we start with $n=3$ and $m\ge n$.  Let $U=V=011$ and
if $m=3$ let $W=U$.  If $m=4$ let $W=1$.  If $m=5$ let $W=01$, and
if $m>5$, let $W=001^{m-5}$.  To finish the case that $n=3$, let
$m=2$.  Then chose $U=W=01$ and $V=1$.  If instead $m=1$ let $U=W=1$
and $V=01$.  Next suppose that $n>4$ and $m\ge n$.  Let
$U=V=001^{n-2}$.  If $m=n$ let $U=W$.  If $m=n+1$ let $W=1$, and if
$m\ge n+2$ let $W=01^{m-n-1}$.  Finally, suppose that $n>4$ and
$m<n$.  If $m=1$ let $U=W=1$ and $V=01^{n-2}$.  If $m=2$ let
$U=W=01$ and let $V=001^{n-4}$. If $m\ge 3$ let $U=W=001^{m-2}$ and
$V=01^{n-m-1}$.
\end{proof}

\begin{proof}[Proof of \ref{UW}.]
  Suppose we are in case (1), $\alp=\bt$.  If $n=1$ let $U=1$.  If
$m=1$ let $U=W$.  If instead $m>2$ let $W=01^{m-2}$.  Case (1.a)
follows by Claims \ref{gheight w(c)} \& \ref{gheight I}.  For case
(1.b), let $n=2$ and $U=01$.  Then if $m=2$ let $U=W$, but if $m=3$
let $W=1$.  If instead $m>4$ let $W=01^{m-3}$.  To finish case (1),
let $2<n$.  Then take $U=001^{n-2}$, and if $m=n$ let $W=U$.  If
$m=n+1$ let $W=1$, and if $m>n+1$ let $W=01^{m-n-1}$.

Next suppose we are in case (2), i.e. $\alp<\bt$.  If $n\ge 3$ then
we can chose words $U$ and $W$ with $U\not\sim W$ such that $|U|=n$
and $|W|=m$.  This is also true if $m\ge 3$.  So suppose that
$m,n\le 2$.  If $m=n$ then let $U=V$ is a word with a $1$ in it of
length $n$.  If $m\neq n$ then one of them is $1$, without loss of
generality, $n=1$.  Then let $U=1$ and $W=01$.
\end{proof}

\section{Restrictions on the Limit Complexity of the Set of
Inhomogeneities}\label{restrictions}

In this section we show that the cases for which we did not give
constructions in the previous section cannot exist.  We accomplish
this by describing the restrictions on the limit complexity of the
set $\C I$.  In \cite{goodknightraines}, we proved the following
theorem which gives a restriction on the limit complexity for any
countable $\w$-limit set and set of inhomogeneities.

\begin{theorem}\cite{goodknightraines}\label{restriction}
  Let $f$ be a unimodal, l.e.o. function with critical point $c_f$
such that $\w(c_f)$ and $\C I_f$ are countably infinite.  Then there
are countable ordinals $\alp$ and $\bt$ and positive integers $n$
and $m$ such that $\w(c_f)\simeq \w^{\alp+1}\cdot n+1$ and $\C
I_f\simeq \w^{\bt+1}\cdot m+1$.
\end{theorem}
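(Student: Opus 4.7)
The plan is to break the theorem into two separate claims: (i) $\w(c_f)$ is homeomorphic to some $\w^{\alp+1}\cdot n+1$, and (ii) $\C I_f$ is homeomorphic to some $\w^{\bt+1}\cdot m+1$. For each of $\w(c_f)$ and $\C I_f$ the Mazurkiewicz--Sierpi\'nski classification of countable compact metric spaces reviewed in Section~\ref{count} immediately gives a homeomorphism with some $\w^\gm\cdot k+1$ for a countable ordinal $\gm$ and positive integer $k$. The only substantive content of the theorem is then that in each case $\gm$ cannot be a limit ordinal.

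For $\w(c_f)$ this is exactly Theorem~3.3 of \cite{goodknightraines}, which says that the scattered height of a countably infinite $\w$-limit set of a continuous interval map cannot be a limit ordinal, and I would simply quote it. For $\C I_f$ the plan is to run the analogous argument, replacing the action of $f$ on $\w(c_f)$ by the induced self-homeomorphism $\hat f$ on $\C I_f$. Suppose $\bt'=\lm$ were a limit ordinal. Then the top Cantor--Bendixson level $L_\lm^{\C I_f}$ is a nonempty finite set; because Cantor--Bendixson derivatives are preserved under any self-homeomorphism, $L_\lm^{\C I_f}$ is $\hat f$-invariant and hence a union of $\hat f$-periodic orbits. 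Any such $y=(y_0,y_1,\dots)$ has each coordinate $y_i$ a periodic point of $f$ in $\w(c_f)$, while at the same time every neighbourhood of $y$ in $\C I_f$ contains points of all limit types less than $\lm$.

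The key step is then to transfer this limit-height accumulation down to $\w(c_f)$ and apply Theorem~3.3 of \cite{goodknightraines} there to obtain a contradiction. This transfer is natural on the symbolic side: by the full-itinerary lemmas of Section~\ref{prelims}, away from orbits containing $*$, $\Fi_f$ is a homeomorphism of $\C I_f$ into $\{0,1\}^\Z$ that conjugates $\hat f$ with the shift $\hat\s$ and realises the projection $\pi_0$ as truncation to the right half of a bi-infinite sequence. Under this conjugacy limit types are determined locally from central segments, so an accumulation of limit types converging to $\lm$ at a periodic $y$ descends to a corresponding accumulation at the periodic point $\pi_0(y)\in\w(c_f)$, forcing a limit-ordinal scattered height on $\w(c_f)$ and contradicting Theorem~3.3 of \cite{goodknightraines}. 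I expect the main obstacle to lie precisely in this descent step: $\pi_0$ can in principle collapse limit types, and orbits passing through precritical points must be treated separately. In both cases, however, the exceptional set is countable and shift-invariant, so it can be deleted from $\w(c_f)$ without affecting whether the scattered height of what remains is a limit ordinal, and the reduction to the cited theorem then goes through.
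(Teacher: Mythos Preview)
The paper does not actually prove Theorem~\ref{restriction}: it is quoted verbatim from \cite{goodknightraines}, with no argument given here beyond the remark (after Theorem~\ref{main}) that the successor form of the exponents follows from Theorem~3.3 of that reference. So there is no in-paper proof to compare your proposal against; you are effectively trying to reconstruct the argument of the cited paper.

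Your treatment of $\w(c_f)$ is fine and matches what the present paper does: invoke Mazurkiewicz--Sierpi\'nski and then Theorem~3.3 of \cite{goodknightraines}. The problem is your argument for $\C I_f$. You correctly identify the obstacle---$\pi_0$ may collapse limit types---but your proposed fix does not work. Saying the ``exceptional set is countable and shift-invariant, so it can be deleted from $\w(c_f)$'' is empty: all of $\w(c_f)$ is countable, and once you delete points the remainder is generally no longer an $\w$-limit set, so Theorem~3.3 no longer applies to it. More concretely, the constructions in Section~\ref{constructions} show exactly that $\C I_f$ can have height $\bt+1$ strictly greater than the height $\alp+1$ of $\w(c_f)$; the projection genuinely destroys limit-type information, and there is no reason an accumulation of types cofinal in a limit ordinal $\lm$ at some periodic $y\in\C I_f$ should descend to a cofinal accumulation at $\pi_0(y)$.

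The natural repair is not to project at all, but to run the Theorem~3.3 argument directly on the system $(\C I_f,\hat\s)$: $\hat\s$ is a homeomorphism, the top level is finite and hence consists of periodic points with full itineraries $U^\Z$, and the symbolic machinery of Section~\ref{prelims} lets you analyse neighbourhoods of such points just as well in $\C I_f$ as in $\w(c_f)$. Whatever mechanism in \cite{goodknightraines} excludes limit-ordinal height for $\w(c_f)$ should transfer to $\C I_f$ in this way; your descent-to-$\w(c_f)$ detour is both unnecessary and, as written, broken.
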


We state here the main results for this section.

\begin{theorem}\label{height 1}
  Let $n\in \N$.  If $f$ is a unimodal, l.e.o. function with
critical point $c$ and $\w(c)\simeq \w\cdot n+1$, then the set of
inhomogeneities, $\C I$, has $\C I\simeq \w\cdot n+1$.
\end{theorem}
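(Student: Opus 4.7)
The plan is to perform a Cantor--Bendixson analysis on $\C I$, exploiting the dynamical structure that $\w(c)\simeq\w\cdot n+1$ forces on $f|_{\w(c)}$. I write $L$ for the $n$-element Cantor--Bendixson derivative of $\w(c)$ and $I=\w(c)\setminus L$ for the set of isolated points. First, I would establish that $f|_L:L\to L$ is a bijection. The inclusion $f(L)\subseteq L$ holds because $f$ is at most two-to-one: a sequence of distinct points in $\w(c)$ converging to $\ell\in L$ maps under $f$ to a sequence in $\w(c)$ with at most two repeats, still accumulating at $f(\ell)$. The inclusion $L\subseteq f(L)$ follows by surjectivity of $f|_{\w(c)}$: lifting a sequence $\ell_k\to\ell\in L$ to preimages $z_k\in\w(c)$ and passing to a convergent subsequence $z_{k_j}\to z$, distinctness of the $\ell_{k_j}$ forces distinctness of the $z_{k_j}$, so $z\in L$ and $f(z)=\ell$. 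Since $L$ is finite, $f|_L$ is a permutation; consequently $f^{-1}(I)\cap\w(c)\subseteq I$.

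Hence for $x=(x_0,x_1,\dots)\in\C I$, the set $\{i:x_i\in L\}$ is downward closed, and $\C I$ partitions into Type~A (all $x_i\in L$), Type~B (an initial segment in $L$, remainder in $I$), and Type~C (all $x_i\in I$). Since $f|_L$ permutes $L$, $\invlim{L}{f|_L}$ consists of precisely $n$ points $\tau_\ell=(\ell,f|_L^{-1}(\ell),f|_L^{-2}(\ell),\dots)$, one per $\ell\in L$, comprising the Type~A points. Each $\tau_\ell$ is a limit point of $\C I$: any basic open set $\pi_N^{-1}(U)\cap\C I$ about $\tau_\ell$ contains, for any open $U$ about $f|_L^{-N}(\ell)\in L$, infinitely many isolated $y\in U\cap I$; each such $y$ lifts via surjectivity of $\pi_N:\C I\to\w(c)$ (from K\"onig's lemma applied to the infinite preimage tree of $y$ in $\w(c)$) to a point of $\pi_N^{-1}(U)\cap\C I$ distinct from $\tau_\ell$.

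The main step, and the principal obstacle, is to show every Type~B and Type~C point is isolated in $\C I$. Given such $x$ with $x_N\in I$, the clopen set $\pi_N^{-1}(\{x_N\})\cap\C I$ corresponds bijectively to the set of infinite $\w(c)$-preimage chains of $x_N$, all lying in $I$. I would argue this set is a singleton for some sufficiently large $N$. Suppose not; then a sequence $x^{(j)}\in\C I\setminus\{x\}$ converges to $x$ with first-difference indices $M_j\to\infty$, so at each $M_j$ the point $x_{M_j-1}\in I$ admits two preimages in $\w(c)\cap I$. Passing to a convergent subsequence $x_{M_j}\to z\in\w(c)$, one argues by cases. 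If $z\in L$, continuity together with the two-to-one structure of $f$ forces the ``other'' preimage $x^{(j)}_{M_j}$ to accumulate at a point in $\w(c)$ that, by bijectivity of $f|_L$, is either $z$ itself (forcing $z=c$) or an isolated point $\ell^*\in I$; the latter forces $x_{M_j-1}=f(\ell^*)\in L$, contradicting $x_{M_j-1}\in I$. If $z\in I$, isolatedness forces $x_{M_j}=z$ eventually, so $z$ is $f$-periodic with cycle in $I\cap\w(c)$; one then leverages the l.e.o.\ property (via Lemmas~\ref{intfirst} and~\ref{epsfirst}) together with $\w(c)\simeq\w\cdot n+1$ having derivative of size exactly $n$ to show that such a cycle cannot coexist with scattered height exactly $1$, the desired contradiction.

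With isolation of Types~B and~C established, $(\C I)'=\{\tau_\ell:\ell\in L\}$ is a finite discrete set of $n$ points, so $(\C I)''=\emptyset$ and $\C I\simeq\w\cdot n+1$. The delicate ruling out of periodic cycles in $I\cap\w(c)$ (or of limit points of $\C I$ arising from them) is the heart of the proof, and is where I expect to use the full force of the hypothesis that $\w(c)$ has scattered height exactly $1$ together with the l.e.o.\ unimodal structure of $f$.
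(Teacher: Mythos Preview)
Your decomposition into Types A/B/C and the dynamical analysis of $f|_L$ are sound, and the case $z\in L$ is handled correctly (the unstated reason ``$z=c$'' is a contradiction is that $c\notin\w(c)$ whenever $\w(c)$ is countable, since a recurrent critical point forces $\w(c)$ to be perfect). This is a genuinely different organisation from the paper's, which works with itineraries throughout: the paper shows directly that every $x\in\C I$ has full itinerary a shift of $W_i^{-\infty}UW_j^\infty$ with $|U|$ uniformly bounded, by arguing that if the backward coordinate sequence $(x_i)$ had two accumulation points one could exhibit infinitely many distinct points of $\w(c)$ converging to something non-periodic, contradicting the lemma that limit points of $\w(c)$ are periodic.

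The real gap is precisely where you locate it: the case $z\in I$. You correctly deduce that $z$ must be $f$-periodic with its entire cycle contained in $I$, but the assertion that such a cycle ``cannot coexist with scattered height exactly~$1$'' is the whole difficulty, and invoking l.e.o.\ via Lemmas~\ref{intfirst} and~\ref{epsfirst} does not by itself yield it. What you actually need is the fact that \emph{every periodic point of an infinite $\w(c)$ is non-isolated in $\w(c)$}. One proves this symbolically: since $K$ is not eventually periodic, for each $M$ there are infinitely many positions where $Z^M$ occurs but $Z^{M+1}$ does not (otherwise $K$ would be eventually $Z^\infty$); pigeonholing on where within the next $|Z|$ symbols the first deviation from $Z^\infty$ falls produces a fixed word $Z^M D$, with $D$ not an initial segment of $Z^\infty$, occurring infinitely often --- hence a second point of $\w(c)$ in the cylinder $[Z^M]$. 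This lemma is of the same order of difficulty as the remainder of the argument and you do not supply it; without it your contradiction does not close. Once it is in hand, your Type~B/C points are indeed isolated and the proof goes through --- but at that stage you have essentially reverted to the symbolic analysis the paper employs from the outset.
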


\begin{theorem}\label{height 2}
  Let $1\le \bt$ be a countable ordinal and $n,m\in \N$.  If $f$ is
a unimodal, l.e.o. function with critical point $c_f$ such that
$\w(c_f)\simeq \w^2\cdot n+1$ and $\C I_f\simeq \w^{\bt+1}\cdot m+1$
then:\begin{enumerate}
\item when $\bt=1$,\begin{enumerate}
\item if $n=1$ then $m\neq 2$,
\item if $n=2$ then $2\le m\neq 4$, and
\item if $2<n$ then $n\le m$.
\end{enumerate}
\item when $1<\bt$,\begin{enumerate}
\item if $n=1$ then $m\neq 2$,
\item if $n=2$ then $2\le m\neq 4$, and
\item if $n=4$ then $m\neq 2$.
\end{enumerate}

\end{enumerate}
\end{theorem}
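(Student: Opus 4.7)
The plan is to analyze the symbolic structure of the top levels of $\w(c_f)$ and $\C I_f$ via their right itineraries and bi-infinite full itineraries, respectively. First, since $\w(c_f)^{(\w+1)}$ is a finite $f$-invariant set of $n$ points, it corresponds to a finite shift-orbit collection of primitive periodic right itineraries $U_1^\infty,\ldots,U_k^\infty$ with $\sum_j|U_j|=n$, each $U_j$ a primitive word of primitive period $|U_j|$. Each such $U_j$ must contain both symbols $0$ and $1$ unless $U_j=1$, which corresponds to a top fixed point above $c_f$.

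Next, analogously to Claim \ref{height I}, one would characterize the top level of $\C I_f$: its points have bi-infinite full itineraries of the form $X^\Z$ for some primitive periodic word $X$, with $X^\infty$ occurring as a right itinerary of a point of $\w(c_f)$ and the bi-infinite extension $X^\Z$ being properly accumulated by itineraries of lower limit type. Each such $X$ (up to the equivalence $\sim$ from Section \ref{constructions}) contributes $|X|$ points to $m$. In case~(1) with $\alp=\bt$, each top right itinerary $U_j^\infty$ lifts to a top bi-infinite itinerary $U_j^\Z$, yielding a base contribution of $\sum|U_j|=n$; additional contributions come from primitive periodic words $W$ with $W^\infty$ the itinerary of a lower-level point in $\w(c_f)$.

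The case analysis then enumerates the possible contributing words. For $n=1$, the sole top word is $U_1=1$, so any additional contributing $W$ must contain both $0$ and $1$, forcing primitive period at least $2$ and hence contribution at least $2$; thus $m=1$ or $m\ge 3$, excluding $m=2$. For $n=2$, the top word is forced to be $U_1\sim 01$ (the unique primitive length-two word containing both symbols), so additional $W$'s contribute either $1$ (when $W=1$) or at least $3$ (since no other primitive length-two word contains both symbols); combined with the base contribution of $2$ from $U_1$, this gives $m\in\{2,3\}$ or $m\ge 5$, ruling out $m=4$ while forcing $m\ge 2$. For $n=4$ in case~(2), the top of $\w(c_f)$ decomposes either as a single primitive length-four orbit or as a length-three orbit plus the fixed-point orbit $1^\infty$, and parallel enumeration rules out $m=2$. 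For $n>2$ in case~(1), since $\alp=\bt$ each top orbit of $\w(c_f)$ lifts to a top orbit of $\C I_f$, yielding $m\ge n$ directly.

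The main obstacle will be the rigorous characterization of top-level points of $\C I_f$ as periodic bi-infinite itineraries, which requires an accumulation argument in the inverse limit generalizing Claim \ref{height I} to arbitrary kneading sequences, not just the ones constructed in Section \ref{constructions}. A further subtlety arises in case~(2) where $\alp<\bt$: not every top orbit of $\w(c_f)$ need lift to a top orbit of $\C I_f$, so one must argue that no combination of which top orbits get promoted together with which additional primitive periodic words appear can yield an excluded value of $m$. This uses Theorem \ref{omegaident} to control which $X^\infty$ actually occur in $\w(c_f)$ and shift-maximality from Theorem \ref{tentpar} to constrain admissible primitive words.
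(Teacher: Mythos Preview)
Your outline correctly identifies the two structural facts that drive the argument: the top level $L_{\beta+1}$ of $\C I_f$ is a finite $\sh$-invariant set and hence consists of periodic bi-infinite itineraries, and the only primitive periodic words of length $1$ and $2$ are (up to $\sim$) the words $1$ and $01$. Your enumeration then works well for case~(1), where $\alp=\bt=1$: Lemma~\ref{proj} guarantees that every top point of $\w(c_f)$ has a preimage in the top of $\C I_f$, so each $U_j^\Z$ really does contribute $|U_j|$ to $m$, and the exclusions of $m=2$ (when $n=1$) and $m=4$ (when $n=2$) follow from your word-length count.

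The genuine gap is in case~(2), where $\alp=1<\bt$. Here Lemma~\ref{proj} gives nothing: a top point of $\w(c_f)$ has limit type $2$, and its preimage is only guaranteed to contain a point of limit type at least $2$, not $\bt+1$. So there is no a priori reason for any $U_j^\Z$ to lie in $L_{\bt+1}$, and your ``base contribution'' of $n$ disappears. Concretely, to rule out $m=2$ when $n=1$ you need to know that $1^\Z\in L_{\bt+1}$; without that, the $2$-cycle $\{(01)^\Z,(10)^\Z\}$ could in principle be the entire top level. Your proposal acknowledges this subtlety but does not resolve it; the references to Theorem~\ref{omegaident} and shift-maximality do not supply the missing link between $L_{\bt+1}\subseteq\C I_f$ and the top level of $\w(c_f)$.

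The paper fills this gap with Proposition~\ref{proj level2}, whose proof is the real work of the section: by analysing how points of $L_\bt$ accumulate on $L_{\bt+1}$ and tracking their full itineraries through several layers of shifts, one shows that \emph{some} point of $L_{\bt+1}$ must project under $\pi_1$ to a point of $\w(c_f)$ of limit type at least $2$. Once this is in hand, the proof of Theorem~\ref{height 2} is a three-line counting argument: if $m=2$ the top of $\C I_f$ is the unique $2$-cycle, which projects to the period-$2$ orbit of $f$, forcing $n\ge 2$; similarly $m=4$ forces $n\neq 2$ and $m=2$ forces $n\neq 4$. You have misidentified the main obstacle: periodicity of $L_{\bt+1}$ is immediate from finiteness and $\sh$-invariance, whereas Proposition~\ref{proj level2} is where the accumulation argument you allude to actually has to happen.
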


\begin{theorem}\label{equal height}
  Let $2\le \alp$ and $n,m\in \N$.  If $g$ is a unimodal, l.e.o. function with
critical point $c_g$ such that $\w(c_g)\simeq \w^{\alp+1}\cdot n+1$
and $\C I_g\simeq \w^{\alp+1}\cdot m+1$ then \begin{enumerate}
\item if $n=1$ then $m\neq 2$,
\item if $n=2$ then $2\le m\neq 4$, and
\item if $2\le n$ then $n\le m$.
\end{enumerate}
\end{theorem}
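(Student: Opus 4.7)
The plan is to exploit the induced shift homeomorphism $\hat g:\C I_g\to\C I_g$, the projection $\pi_0:\C I_g\to\w(c_g)$, and the parity-lexicographic structure of the kneading sequence $K_g$. A standard transfinite induction using that $\pi_0$ is a continuous surjection of compact Hausdorff spaces yields $\w(c_g)^{(\gamma)}\subseteq\pi_0(\C I_g^{(\gamma)})$ for every ordinal $\gamma$; in particular at $\gamma=\alp+1$ the top level of $\w(c_g)$ lies in $\pi_0(\C I_g^{(\alp+1)})$. Because $\hat g$ is a homeomorphism it preserves Cantor--Bendixson rank and hence permutes the finite set $\C I_g^{(\alp+1)}$, while the identity $\pi_0\circ\hat g=g\circ\pi_0$ forces each $\hat g$-cycle in $\C I_g^{(\alp+1)}$ to project onto a $g$-cycle of the same length in $\w(c_g)$. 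A short uniqueness argument on periodic bi-infinite itineraries---the only one with prescribed periodic forward itinerary $W^\infty$ is $W^\Z$---shows that each $g$-cycle in $\pi_0(\C I_g^{(\alp+1)})$ is the image of exactly one $\hat g$-cycle. Hence $m=\sum_C|C|$ as $C$ ranges over the $g$-cycles of $\pi_0(\C I_g^{(\alp+1)})$. Since these cycles contain the entire top level of $\w(c_g)$, a union of $g$-cycles whose lengths sum to $n$, we immediately obtain $m\ge n$, which proves part~(3).

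The key extra input for parts~(1) and~(2) is the symbolic lemma that no point of $\w(c_g)$ has itinerary $0^\infty$. I would argue directly from parity-lexicographic shift-maximality of $K_g$: since $g(c_g)>c_g$ the sequence $K_g$ begins with the symbol $1$, and since $\w(c_g)$ has rank $\alp+1\ge3$ we cannot have $K_g=10^\infty$. A direct comparison then shows that whenever $K_g$ contains a block $10^N$ starting at some position $j>0$, the shift-maximality relation $\sh^jK_g\preceq K_g$ forces $K_g$ itself to begin $10^N\cdots$. Consequently every $0$-block in $K_g$ has length bounded by the length of its initial $0$-block, so $0^\infty$ cannot occur as a recurring pattern; by Theorem~\ref{omegaident} no point of $\w(c_g)$ has itinerary $0^\infty$. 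This means $g$ has at most one fixed point in $\w(c_g)$ (the one with itinerary $1^\infty$), and the only shift-orbit of primitive period $2$ in $\{0,1\}^\infty$ whose points can belong to $\w(c_g)$ is that of $(01)^\infty$.

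The exclusions then follow. For part~(1), if $n=1$ the sole top-level point of $\w(c_g)$ is a fixed point of $g$ with itinerary $1^\infty$; an extra $g$-cycle contributing to $m-n$ would have to be a second fixed point, but the only candidate has itinerary $0^\infty$ and is ruled out, so $m-n\ne1$ and hence $m\ne2$. For part~(2), if $n=2$ the top level of $\w(c_g)$ cannot consist of two fixed points and must therefore be the unique period-$2$ shift-orbit; the value $m=4$ would require extras summing to $2$, and the only options are two extra fixed points (blocked by $0^\infty\notin\w(c_g)$) or one extra period-$2$ cycle (blocked because the only period-$2$ orbit is already the top). The main technical obstacle I anticipate is the shift-maximality bookkeeping ruling out $0^\infty$ from $\w(c_g)$; once that symbolic lemma is in place, the cycle counting on $\C I_g^{(\alp+1)}$ delivers all three restrictions.
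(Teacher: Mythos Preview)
Your argument is correct and rests on the same two ingredients as the paper: the projection inequality (the paper's Lemma~\ref{proj}, which you rederive by transfinite induction) for part~(3), and the observation that the top level of $\C I_g$ decomposes into $\hat g$-cycles, combined with the scarcity of periodic orbits of period~$1$ and~$2$, for the exclusions in parts~(1) and~(2). The paper's own proof is two sentences long---it cites Lemma~\ref{proj} for $n\le m$ and then gestures at the proof of Theorem~\ref{height 2}, where the key facts are simply asserted: $1^\Z$ is the unique $\hat g$-fixed point in $\ilim g$ and $\{(01)^\Z,(10)^\Z\}$ is the unique $\hat g$-orbit of period~$2$. Your write-up organises the same idea a bit differently and more carefully: you push the cycle analysis down to $\w(c_g)$ via the bijection between $\hat g$-periodic points and $g$-periodic points, and you justify the absence of $0^\infty$ from $\w(c_g)$ directly from the parity-lexicographic shift-maximality of $K_g$, rather than relying on an ambient statement about the whole inverse limit (which, as literally asserted in the paper, actually fails for e.g.\ the full tent map). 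This extra step is exactly what is needed to make the argument apply to an arbitrary l.e.o.\ unimodal map; the cost is a short computation bounding the length of $0$-blocks in $K_g$ by the initial one, which is standard.
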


\begin{lemma}
  Let $n\in \N$.  Suppose that $f$ is a unimodal, l.e.o. function with critical
 point $c_f$.  Suppose that $\w(c_f)\simeq \w\cdot n+1$.
Then the limit points of $\w(c_f)$ are periodic.
\end{lemma}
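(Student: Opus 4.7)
The plan is to show that $f$ restricts to a permutation on the (necessarily finite) set $L$ of limit points of $\w(c_f)$; since $L$ is finite, any permutation of it is periodic, so every element of $L$ is periodic under $f$.

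First I would observe that $\w(c_f)\simeq \w\cdot n+1$ exactly means $|L|=n$ (the top Cantor--Bendixson level of $\w(c_f)$) and $\w(c_f)\setminus L$ consists of isolated points of $\w(c_f)$. Since $f(\w(c_f))=\w(c_f)$, it suffices to show $f(L)=L$.

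For $f(L)\subseteq L$, let $x\in L$ and pick a sequence $(x_k)$ of distinct points of $\w(c_f)$ converging to $x$. Then $f(x_k)\to f(x)$ by continuity. Because $f$ is unimodal, $|f^{-1}(y)|\le 2$ for every $y$, so at most two of the $x_k$ share any given image; passing to a subsequence gives infinitely many distinct values $f(x_k)\in \w(c_f)$ approaching $f(x)$, so $f(x)$ is not isolated in $\w(c_f)$, i.e., $f(x)\in L$.

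For $f(L)\supseteq L$, let $y\in L$ and pick distinct $y_k\in\w(c_f)\setminus\{y\}$ with $y_k\to y$. By surjectivity of $f|_{\w(c_f)}$, choose $z_k\in\w(c_f)$ with $f(z_k)=y_k$, and by compactness extract a convergent subsequence $z_{k_j}\to z\in\w(c_f)$, so $f(z)=y$. I would then rule out that $z$ is isolated: if $z$ had a neighborhood $U$ with $U\cap\w(c_f)=\{z\}$, then $z_{k_j}=z$ for large $j$, forcing $y_{k_j}=y$, contrary to the choice of the $y_k$. Hence $z\in L$ and $y\in f(L)$.

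Combining the two inclusions, $f$ restricts to a surjection $L\to L$ on a finite set of cardinality $n$, hence to a permutation, so every point of $L$ lies on a finite cycle and is periodic. The mildly delicate step is the argument for $f(L)\supseteq L$, since one must leverage both the surjectivity of $f$ on $\w(c_f)$ and the isolated/limit dichotomy; the at-most-two-to-one remark handles the forward inclusion cleanly, but without the compactness and neighborhood-of-an-isolated-point argument, one cannot rule out that $y\in L$ has all its preimages in the isolated part.
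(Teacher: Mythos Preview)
Your proof is correct and takes essentially the same approach as the paper. The paper's proof is a one-liner---``This follows since $f(\w(c_f))=\w(c_f)$ and $f$ must map these finitely many limit points to themselves (since it is finite-to-one)''---and your argument is precisely the careful unpacking of that sentence into the two inclusions $f(L)\subseteq L$ and $L\subseteq f(L)$.
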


\begin{proof}
  This follows since $f(\w(c_f))=\w(c_f)$ and $f$ must map these
  finitely many limit points to themselves (since it is finite-to-one).
\end{proof}

\begin{proof}[Proof of \ref{height 1}.] We know that $c$ is  not recurrent, i.e $c\notin\w(c)$, as otherwise $\w(c)$ would be a Cantor set. Since $f$ is l.e.o.,
therefore, the itinerary map is a homeomorphism from $\w(c)$ to its
image. Let $W_i$, $i\le n$ be a finite word in $\{0,1\}$ such that
the (periodic) limit points of $\w(c)$ have itinerary
  of the form $W_i^\infty$ and their shifts.

 Let $x\in \C I$.  Then consider the full itinerary of $x$,
 $Fi(x)$.  For each $i\in \N$, let $B_i$ be the itinerary of
 $x_i=\pi_i(x)\in \w(c)$.  Since $\w(c)$ is closed, the limit points of $(x_i)_{i\in \N}$ are in $\w(c)$.  Suppose that
 the $x_i$s converge to more than one point, $x$ and $x'$.  Then there are some pair of words $W_i$ and $W_j$ such that the itinerary
 of $x$ is $W_i^\infty$ and the itinerary of $x'$ is $W_j^\infty$.
 Let $(j_n)_{n\in \N}$ and $(i_n)_{n\in \N}$ be defined such that
 $j_{n+1}<i_{n+1}<j_n<i_n$ for each $n\in \N$ and such that
 $I(x_{j_n})=W_j^{m_n}...$  and
 $I(x_{i_n})=W_i^{m_n}...$ where $m_n\to \infty$.  Since the
 sequences $(i_n)_{n\in \N}$ and $(j_n)_{n\in \N}$ are intertwined
 we see that each $I(x_{j_n})=W_j^{m_n} U_{j_n}V_{j_n} W_i^{m_n}...$, where $U_{j_n}\neq W_j$ but $|U_{j_n}|=|W_j|$.
 Since there are only finitely many possibilities for $U_{j_n}$, there
 is some $U\neq W_j$ of the same length as $W_j$ and
 another sequence $(x_{t_n})_{n\in\N}$ with $I(x_{t_n})=W_j U...$.  As $n\to \infty$, the points $x_{t_n}$ converge to a
 limit point $z$ with itinerary $I(z)=W_j U ...$. Clearly $z$ is not periodic which contradicts the
 fact that it is a limit point.  Hence $(x_i)_{i\in \N}$ has a unique limit point $W_i^\infty$, say.
 This implies that the full itinerary of $x$ is of the form
 $Fi(x)=\sh^r(W_i^{-\infty}. V)$ for some $r\in \Z$.  By a similar argument using the sequence
 $(f^i(x))_{i\in\N}$ we see that in fact there is a finite word $U$
 with $Fi(x)=\sh^r(W_i^{-\infty}. U W_j^\infty)$ for some $r\in \Z$ and finite word
 $W_j$
 that gives the itinerary of a limit point of $\w(c)$, where, if  $|U|>0$, then it  does not have $W_i$ as an initial segment
 or $W_j$ as a final segment.

 Suppose that for every $m\in \N$ there is a point, $x_m\in
 \C I$ with $Fi(x_m)=\sh^{r_m}(W_{i}^{-\infty} U_m
 .W_{j}^\infty)$ for some $r_m\in \Z$
 and with $|U_m|\ge m$.  Then let $(y_m)_{m\in \N}\subseteq \w(c)$ such that $I(y_m)=U_m
W_j^{\infty}$.  Let $y\in (y_m)'_{m\in \N}$.  Then by assumption,
$I(y)=W_k^\infty$ for some $1\le k\le n$.  Since $W_i$ is not an
initial segment of $U_m$ we know $k\neq i$. So
$Fi(x_m)=\sh^{r_m}(W_i^{-\infty} W_k^{t_m} U_m'. W_j^\infty)$ for
some $r\in \Z$. Hence there is an infinite collection $(z_m)_{m\in
\N}$ with $I(z_m)=W_i W_k^{t_m} U_m' W_j^{\infty}$.  This converges
to $W_i W_k^\infty$ which is not a limit point for $\w(c)$, a
contradiction. Thus there is some $K$ such that for every $x\in \C
I$ if $Fi(x)=\sh^r(W_i^{-\infty} . U W_j^\infty)$ for some $r\in \Z$
then $0\le |U|\le K$.

We next show that $\C I$ is homeomorphic to $\w(c)$ by showing that
it is a countable collection of isolated points with the same number
of limit points as $\w(c)$.  Clearly, for every $x\in \C I$ with
$Fi(x)=\sh^r(W_i^{-\infty}.W_i^\infty)$ is a limit of points from
$\C I$.  So instead consider $x$ with $Fi(x)=\sh^r(W_i^{-\infty}U .
W_j^\infty)$ for some $r\in \Z$ and some $U$ with $0\le |U|\le K$.
There are only finitely many choices for $U$ and so all of these
countably many points are isolated. Hence $\C I$ is a finite
collection of convergent sequences.
\end{proof}

Next we show that if $\w(c)$ is homeomorphic to some
$\w^{\alpha+1}\cdot n+1$ then $\C I$ cannot be less complicated then
$\w(c)$.  This will follow because the projection map is a
continuous surjection and the following proposition.

\begin{lemma}\label{proj}
  Let $\alpha$ and $\beta$ be countable ordinals, and suppose that $p:\w^\beta\cdot m+1\to \w^\alpha\cdot n+1$ is a
   continuous surjection.  Then $\alpha\le \beta$.  Moreover, if
   $\alpha=\beta$ then $m\le n$.
\end{lemma}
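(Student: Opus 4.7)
The plan is to deduce the lemma from a general fact, established by transfinite induction on $\gm$: for any continuous surjection $p\colon X\to Y$ between countable compact Hausdorff spaces, $p\bigl(X^{(\gm)}\bigr)\supseteq Y^{(\gm)}$ for every ordinal $\gm$, where $X^{(\gm)}$ denotes the $\gm$-th Cantor-Bendixson derivative. Taking $X=\w^\bt\cdot m+1$ and $Y=\w^\alp\cdot n+1$, the description of iterated derivatives from Section \ref{count} gives $|X^{(\bt)}|=m$, $X^{(\bt+1)}=\emptyset$, $|Y^{(\alp)}|=n$, and $Y^{(\alp+1)}=\emptyset$. If $\alp>\bt$ then $Y^{(\bt+1)}\supseteq Y^{(\alp)}\nempty$ while $X^{(\bt+1)}=\emptyset$, contradicting the inclusion; hence $\alp\le\bt$. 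When $\alp=\bt$, restricting $p$ to $X^{(\alp)}$ yields a surjection of finite sets onto $Y^{(\alp)}$, which delivers the required numerical comparison of $m$ and $n$.

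For the induction itself, the case $\gm=0$ is immediate from surjectivity. For a successor $\gm=\d+1$ I would fix $y\in\bigl(Y^{(\d)}\bigr)'$ and exploit metrizability of $Y$ (which holds because $Y$ is countable, compact, and Hausdorff) to choose distinct $y_k\in Y^{(\d)}$ with $y_k\to y$. The inductive hypothesis supplies $x_k\in X^{(\d)}$ with $p(x_k)=y_k$, and these are distinct because the $y_k$ are; a convergent subsequence $x_{k_j}\to x$ exists in compact $X$, continuity forces $p(x)=y$, and the fact that $x$ is a limit of distinct points of $X^{(\d)}$ places it in $X^{(\d+1)}$. For a limit ordinal $\gm$ and $y\in Y^{(\gm)}=\bigcap_{\d<\gm}Y^{(\d)}$, the inductive hypothesis makes each $p^{-1}(y)\cap X^{(\d)}$ a nonempty closed subset of $X$; these sets are decreasing in $\d$ and so enjoy the finite intersection property, and compactness of $X$ produces a point in $\bigcap_{\d<\gm}\bigl(p^{-1}(y)\cap X^{(\d)}\bigr)=p^{-1}(y)\cap X^{(\gm)}$.

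The step I expect to require the most care is the limit stage of the transfinite induction, since it relies on three ingredients meshing correctly: the inductive hypothesis holding at every earlier level, the monotonicity of the family $\{X^{(\d)}\}_{\d<\gm}$ (so that the relevant closed sets have the finite intersection property), and the compactness of $X$. Everything else is routine: the successor step is just sequential compactness once metrizability is noted, and the final conclusions about $\alp,\bt$ and about $m,n$ are immediate counting deductions at the top Cantor-Bendixson level.
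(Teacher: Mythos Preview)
Your argument is correct and follows the same line as the paper's own proof: a transfinite induction establishing that each point of Cantor--Bendixson rank $\gamma$ in the target has a preimage of rank at least $\gamma$, from which both conclusions follow at once. Your treatment is somewhat tidier in that you separate the successor and limit stages (via sequential compactness and the finite-intersection property) where the paper handles both together using shrinking clopen neighbourhoods; note also that the inequality in the statement should read $n\le m$, which is what both your argument and the paper's proof actually yield (and what the paper uses downstream).
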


\begin{proof}
  Since $p$ is continuous, if $z\in \w^\alpha\cdot n+1$
  is an isolated point then $U=\{z\}$ is an open set containing $z$.  Hence $p^{-1}(U)$ is open set
  in $\w^\beta\cdot m+1$ therefore it contains an isolated point $z'$ which maps to $z$.  We continue inductively
  to prove that if $x\in \w^\alpha\cdot n+1$ with limit type $\gamma$ then there is a point in the preimage of $x$ that
  has limit type at least $\gamma$.

  Suppose that for all $\lambda<\gamma$ every point in $\w^\alpha\cdot n+1$ with limit type $\lambda$ has a point in
  its preimage with limit type at least $\lambda$.  Let $z\in \w^\alpha\cdot n+1$ such that the limit type of $z$ is $\gamma$.
  Then there is a clopen neighborhood, $U$, of $z$ such that $z$ is the only point in $U$ with limit type $\gamma$ and for any
  $\lambda<\gamma$ there are infinitely many points in $U$ with limit type $\lambda$.  Then $p^{-1}(U)$ is a clopen neighborhood of
  $p^{-1}(z)$ which contains infinitely many points with limit type at least $\lambda$ for all $\lambda<\gamma$.  Hence some
  point in $p^{-1}(U)$ has limit type greater than or equal to
  $\gamma$.  Since this is true for all clopen neighborhoods $U$ of
  $z$ we see that $p^{-1}(z)$ contains a point with limit type
  $\gamma$.  It follows that the points, $v$, with highest limit type in $\w^\alpha\cdot
  n+1$ have a preimage with limit type at least $\alpha$.  Hence
  $\alpha\le \beta$.

  Now suppose that $\alpha=\beta$.  Then there are $n$-many points,
  $v_i$, in $\w^\alpha\cdot n+1$ with highest limit type.  For each
  $1\le i\le n$, $p^{-1}(v_i)$ contains a point, $w_i$, with limit type
  $\alpha$.  Thus in $\w^\beta\cdot m+1=\w^\alpha\cdot m+1$ we have at least $n$-many
  points with limit type $\alpha$.  Hence $n\le m$.
\end{proof}

\begin{lemma}
  Let $f:[0,1]\to [0,1]$ be unimodal with critical point $c$.  Then if $\alpha$ is a countable ordinal and $\w(c)$
  contains a point with limit type $\alpha$, then $\C I$ must contain a point with limit type $\alpha$.
\end{lemma}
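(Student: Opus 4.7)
The plan is to reduce this to the argument already carried out in Lemma \ref{proj}. The projection map $\pi_0 \colon \C I = \invlim{\w(c)}{f|_{\w(c)}} \to \w(c)$ is continuous by the definition of the inverse limit topology, and it is surjective because $f(\w(c))=\w(c)$, so for any $x \in \w(c)$ we can choose inductively a sequence of preimages $x_i \in \w(c)$ with $f(x_{i+1}) = x_i$ and $x_0 = x$, assembling them into a point of $\C I$ that projects to $x$. So we have a continuous surjection between compact metric spaces, which is exactly the setup used in Lemma \ref{proj}.

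Next I would observe that the inductive Cantor--Bendixson argument given in the proof of Lemma \ref{proj} never uses the specific form $\w^{\beta}\cdot m + 1$ of the spaces involved; it uses only that both spaces are compact (so limit points of sequences exist) and that the sets $X^{(\lambda)}$ are closed for every $\lambda$. Accordingly, by induction on the ordinal $\alpha$, for every $x \in \w(c)$ with $x \in \w(c)^{(\alpha)}$ and every clopen (or merely open) neighbourhood $U$ of $x$ in $\w(c)$, the open set $\pi_0^{-1}(U) \subseteq \C I$ contains a point of limit type at least $\alpha$. The induction step is identical: on a successor $\gamma = \lambda+1$ use that $U$ contains infinitely many points of limit type $\ge \lambda$, lift them to $\C I$ by induction, and extract a limit point in the closed set $\C I^{(\lambda)}$, which then lies in $\C I^{(\gamma)}$; for limit $\gamma$, the required point sits in $\bigcap_{\lambda < \gamma} \C I^{(\lambda)} = \C I^{(\gamma)}$ by closedness.

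Taking a decreasing neighbourhood basis $U_n \downarrow \{x\}$ and choosing $y_n \in \pi_0^{-1}(U_n) \cap \C I^{(\alpha)}$, compactness of $\C I$ yields a convergent subsequence with limit $y$; continuity gives $\pi_0(y) = x$, and since $\C I^{(\alpha)}$ is closed we have $y \in \C I^{(\alpha)}$. Thus $y$ is a point of $\C I$ whose limit type is at least $\alpha$, which is the required conclusion.

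The main obstacle is a notational/conceptual one rather than a real difficulty: Lemma \ref{proj} is phrased for ordinal spaces, and here we want to invoke the underlying argument for an arbitrary continuous surjection where the domain $\C I$ need not be (and in some cases will not be) countable or scattered. The argument goes through unchanged, because the only properties used are compactness of domain and target and the closedness of the Cantor--Bendixson derivatives, both of which hold in our setting. If desired, one could preface the proof by stating the general fact as a small auxiliary lemma, but the cleanest presentation is to point to the inductive step from Lemma \ref{proj} verbatim.
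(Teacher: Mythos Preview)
Your proposal is correct and follows essentially the same approach as the paper: invoke the continuous surjection $\pi$ from $\C I$ onto $\w(c)$ and apply (the argument of) Lemma~\ref{proj}. You are in fact more careful than the paper's one-line proof, since you explicitly note that Lemma~\ref{proj} is stated only for ordinal spaces and then observe that its inductive Cantor--Bendixson argument goes through for any continuous surjection between compact metric spaces, regardless of whether the domain is scattered; the paper glosses over this point.
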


\begin{proof}
  This follows from the previous lemma and the fact that $\pi_1$ is a continuous surjection from $\C I$ onto $\w(c)$.
\end{proof}

The next proposition will imply a restriction on the size of the top
level of $\C I$.

\begin{proposition}\label{proj level2}
  Let $1\le\alp\le \bt$ be countable ordinals and $n,m\in \N$.  Let
$f$ be a unimodal, l.e.o. function with critical point $c$. Suppose
that $\w(c)\simeq \w^{\alpha+1}\cdot n+1$.  Suppose that $\C I\simeq
\w^{\beta+1}\cdot m+1$.  Then there is a point $v\in \C I$ with
limit
  type $\beta+1$ such that $\pi_1(v)\in \w(c)$ has limit type
  $\gamma\ge 2$.
\end{proposition}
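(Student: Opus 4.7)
The plan is to argue by contradiction: suppose that every top-level point $v\in V:=\C I^{(\beta+1)}$ has $\pi_1(v)\in\w(c)$ of limit type $\leq 1$. I will first establish the general \emph{lifting principle}: for every countable ordinal $\gamma$,
$$
\pi_1\bigl(\C I^{(\gamma)}\bigr)\supseteq \w(c)^{(\gamma)}.
$$
The proof is by transfinite induction, using the same idea as Lemma \ref{proj}. At a successor $\gamma+1$, any $y\in\w(c)^{(\gamma+1)}$ is a limit of distinct $y_k\in\w(c)^{(\gamma)}$; by the inductive hypothesis each $y_k$ lifts to some $\tilde y_k\in\C I^{(\gamma)}$; pass to a convergent subsequence $\tilde y_{k_j}\to\tilde y$ in the compact $\C I$; then $\pi_1(\tilde y)=y$ and $\tilde y\in\bigl(\C I^{(\gamma)}\bigr)'=\C I^{(\gamma+1)}$. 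At limit ordinals, use that $\pi_1$ commutes with decreasing intersections of compact sets.

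In the case $\alp=\bt$, applying the lifting principle at $\gm=\bt+1=\alp+1$ gives $\pi_1(V)\supseteq \w(c)^{(\alp+1)}$, which is the top level of $\w(c)$ consisting of $n$ points of limit type $\alp+1\geq 2$. Thus some top-level $v\in V$ projects to a point of limit type $\alp+1\geq 2$ in $\w(c)$, contradicting the hypothesis. In the case $\alp<\bt$, however, $\w(c)^{(\bt+1)}=\nowt$ and the lifting principle is vacuous; here I exploit the shift invariance of $V$. Since $\sigma=\hat f^{-1}$ is a homeomorphism of $\ilim f$ preserving $\C I$, it preserves the top level $V$, and as $V$ is finite, $\sigma|_V$ is a permutation. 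So for any $v\in V$ and $k\geq 0$, $\sigma^k(v)\in V$ gives $v_{k+1}=\pi_1(\sigma^k(v))\in P:=\pi_1(V)$, and similarly $v_0=\pi_1(\hat f(v))\in P$. Consequently every coordinate of every $v\in V$ lies in the finite set $P$ of limit-type-$\leq 1$ points in $\w(c)$, and each $v\in V$ is $\hat f$-periodic, corresponding to a periodic orbit of $f$ in $P$.

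The main obstacle is to complete the contradiction when $\alp<\bt$. The goal is to show that an $\hat f$-periodic point $v$ with all coordinates in a finite set of limit-type-$\leq 1$ points of $\w(c)$ cannot have limit type $\bt+1\geq 3$ in $\C I$. The approach uses Lemmas 2.5 and 2.6, which translate metric proximity in $\C I$ into agreement of full itineraries on long central segments. Since $v$ is $\hat f$-periodic, $\Fi(v)=W^{\Z}$ is bi-infinite periodic. Analyze the structure of $\C I$-points $v'$ in a shrinking cylinder around $v$: each such $v'$ has $\Fi(v')$ matching $W^{\Z}$ on a long central block, and its deviation from $W^{\Z}$ at the two ends corresponds to a backward preimage choice at some $v_i\in P$ (for the left tail) and a forward itinerary segment appearing in $K$ (for the right tail). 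Since every $v_i\in P$ has limit type $\leq 1$ in $\w(c)$, the preimage branching at each coordinate generates only limit-type-$\leq 1$ new structure, and the recursive build-up of limit points near $v$ in $\C I$ inherits from the $\leq 1$ complexity of $P$ together with the $\alp+1$ complexity of $\w(c)$. A careful bookkeeping of these contributions—using the iterated Cantor--Bendixson derivatives of $\w(c)$ and the structure of $f$-preimage trees over $P$—bounds the limit type of $v$ in $\C I$ by a countable ordinal of the form $\leq\alp+1$, contradicting $\bt+1>\alp+1$. Executing this bookkeeping carefully, and correctly quantifying how limit types propagate through the inverse-limit construction, is the technical heart of the argument.
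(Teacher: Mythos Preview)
Your lifting principle is correct, and for $\alp=\bt$ it gives a clean direct proof: since $\pi_1\bigl(\C I^{(\alp+1)}\bigr)\supseteq\w(c)^{(\alp+1)}$ and $\alp+1\ge2$, some top-level $v$ projects to a point of limit type $\alp+1\ge2$. This is neater than the paper's approach, which does not split into cases and instead carries out a uniform itinerary analysis.

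The $\alp<\bt$ case, however, has a genuine gap. Your proposed mechanism---bounding $\lt_{\C I}(v)$ by $\alp+1$ from the fact that the coordinates of $v$ lie in a finite set $P$ of limit-type-$\le1$ points---fails already in the paper's own constructions. In the kneading sequence $K$ of Section~\ref{constructions} with $\alp=1$ and any $\bt>1$, the periodic point $W^{\Z}$ lies in $L_{\bt+1}\subseteq\C I$, while every one of its coordinates is a shift of $W^{\infty}$, which has limit type exactly $1$ in $\w(c)$ (see Claim~\ref{height w(c)}). So a periodic point with all coordinates of limit type $1$ can have arbitrarily large limit type in $\C I$; no local bookkeeping over the coordinates of a single $v$ can yield the bound you claim. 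Of course in that example the contradiction hypothesis fails because $U^{\Z}$ is also top-level and $U^{\infty}$ has limit type $\alp+1$, but that is precisely the point: your sketch gives no way to exploit the \emph{global} assumption that \emph{all} top-level points project to $P$, as opposed to just the one you are examining.

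What the paper actually does is quite different. It works directly (not by contradiction), first showing that points in $L_{\bt}$ have full itineraries of the form $U^{-\infty}BV^{\infty}$ with $U^{\Z},V^{\Z}\in L_{\bt+1}$, and then analyzing the possible itineraries of points $v_{k,n}\in L_{\bt-1}$ converging to each $u_n\in L_{\bt}$. In every case this produces, inside $\w(c)$, a two-parameter family of points whose double limit forces some $Z^{\infty}$ with $Z^{\Z}\in L_{\bt+1}$ to have limit type $\ge2$. The key idea you are missing is this passage from the itinerary structure of two adjacent levels of $\C I$ down to a two-step limit in $\w(c)$; it is exactly here that the interaction between \emph{different} top-level periodic words is used, and this is what your local analysis of a single $v$ cannot see.
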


\begin{proof}  

 Consider the finite set of
  points in $\C I$ with limit type $\beta+1$, $L_{\beta+1}$.  Since $\sh$
  is a homeomorphism on $\ilim{f}$ with $\sh(\C I)=\C I$, we see
  that $\sh(L_{\beta+1})=L_{\beta+1}$.  Hence each point in $L_{\beta+1}$ is a
  periodic point under $\sh$.  Since $f$ is l.e.o. and unimodal we
  see that each point in $L_{\beta+1}$ can be represented by a full
  itinerary of the form $U^{\Z}$ where $U$ is a finite word.

 Let $w\in L_{\beta+1}$.  Then there is a sequence $u_n\in L_{\beta}$ such
  that $u_n\to w$.  Moreover,  for each $n$, the sequence $(\sh^{k}(u_n))_{k\in
  \Z}$ has all of its limit points in $L_{\beta+1}$ which is a finite
  set of periodic points.  We claim that this implies that the full
  itinerary of $u_n$ is $U^{-\infty} B V^{\infty}$ where $U^\Z$ and
  $V^{\Z}$ are each a full itinerary of a point in $L_{\beta+1}$ and $B$ is a finite word (possibly empty).  To
  see this notice that as $k\to -\infty$ there is a subsequence,
  $(k_i)_{i\in \N}$ such that $\sh^{k_i}(u_n)\to u\in L_{\beta+1}$ where
   the full itinerary of $u$ is of the form $U^\Z$ we see that
  for some finite words, $B_n$ with the property that the first $|U|$ segment
   of $B_n$ is not $U$, and right-infinite word, $B_0$, and integers $m_n$ the full itinerary of $u_n$ is
  $$\dots U^{m_n}B_nU^{m_{n-1}}B_{n-1}\dots U^{m_1}B_1B_0$$  or
  $$U^{-\infty} B_1.B_0$$  It might be the case that $B_1$ is the empty word.  In the first case by considering the
  shifts of $u_n$ that correspond with $$\dots
  B_{n+1}U^{m_n}.B_n\dots$$ we get a limit point, $v\in L_{\beta+1}$, of the form
  $U^{-\infty}.B$ where the first $|U|$-length segment of $B$ is not
  $U$.  Now by considering shifts of that word with full itinerary
  $U^{-\infty}.U^NB$ we see that the point $u$ with full itinerary
  $U^\Z$ is actually a limit point of $L_{\beta+1}$ and hence is in
  $L_{\beta+2}$.  This contradicts the assumption that $\beta+1$ is
  the highest limit type in $\C I$.  Thus the full itinerary of
  $u_n$ is $$U^{-\infty}B_1.B_0$$ where $B_1$ could be empty.  By a similar argument for the
  positive shifts of $u_n$, we see that in fact the full itinerary
  of $u_n$ is of the form $$U^{-\infty}B_1B_0'V^{\infty}$$ where
  $B_0'$ is a finite (perhaps empty) subword of $B_0$ with the property that the
  last $|V|$-length segment of $B_0'$ is not $V$ if it is non-empty, and the origin could be in between $V$s $U$s or $B_0'$ and
  $B_1$.  Notice that each $B_0'$ and $B_1$ depends upon the point
  $u_n$ so there are potentially infinitely many different such
  words.  However if these central words are nonempty and grow with unbounded length in $n$ then
  they will limit to something in $L_{\beta+1}$.  This
  will lead us to a contradiction as above.  Thus there are only finitely many different words $B_1B_0'$ that are
  possible.  So we lose no generality in assuming that they are all
  the same, $B$.  Then the $u_n$s are just points with full
  itinerary $$U^{-\infty}BV^\infty$$

 Since $\beta\ge \alpha\ge 1$ we know that the limit
  type of $u_n$ is at least $1$.  So for each $n\in \N$ let
  $v_{k,n}$ be a sequence of points in $\C I$ such that $v_{k,n}\to
  u_n$ as $k\to \infty$.  We have the following possibilities for
  the full itinerary of $v_{k,n}$:\begin{enumerate}
    \item $B_kU^{N_k}B V\infty$; \item $B_kU^{N_k}BV^{M_k}C_k$;
    \item $U^{-\infty}BV^{M_k}C_k$
  \end{enumerate}
 where the last $|U|$-length segment of $B_k$ is not $U$ and the
 first $|V|$-length segment of $C_k$ is not $V$ and $M_k, N_k\to
 \infty$ as $k\to \infty$.

In cases $(2)$ and $(3)$ we have points in $\w(c)$ with itinerary
$U^{N_k}BV^{M_k}C_k$.  Fix $n\in \N$.  Then for all $M_k\ge n$ we
also have a point in $\w(c)$ with itinerary $U^nBV^{M_k}C_k$.  So as
$k\to \infty$ these points converge to a point with limit type at
least $1$ with itinerary $U^nBV^\infty$. But these converge to a
point with limit type at least $2$ with itinerary $U^\infty$.  Hence
there is a point, $u\in \C I$, with limit type $\beta+1$ such that
$\pi_1(u)$ has limit type greater than $1$.  So suppose that the
itineraries of $v_{n,k}$ satisfy case $(1)$ above.

Now consider the shifts of $v_{n,k}$ which have full itinerary
$B_k.U^{N_k}\dots$.  As $k\to \infty$, $B_k\to D$ where the last
$|U|$ length segment of $D$ is not $U$.  Thus these shifts of
$v_{n,k}$ converge to some point, $x$, with full itinerary
$D.U^\infty$ in $L_{\beta}$.  The previous argument regarding the
full itinerary of $u_n$ holds for any point in $L_{\beta}$.  Hence
either there is some non-empty word $C$ such that the full itinerary
of $x$ is $Y^{-\infty} C .U^\infty$, or the full itinerary of $x$ is
of the form $Y^{-\infty}.U^\infty$ where $Y^\Z$ corresponds to a
point in $L_{\beta+1}$.

Suppose that the full itinerary of $x$ is $Y^{-\infty} C.U^\infty$.
Then since $B_n\to D=Y^{-\infty}C$ for every $m\in \N$ there is a
$P_m\in \N$ such that for all $N\ge P_m$, $$B_N=B_N'Y^mC$$ where
$B_N'\neq Y^{-\infty}$.  So we see that in fact we have for each
$m\in \N$ infinitely many $v_{n,k}$s with full itinerary
$$B_N'Y^mCU^{N_k}BV^\infty$$  Thus for each $m$ there is an infinite
collection of points in $\w(c)$ with itinerary
$Y^mCU^{N_k}BV^\infty$.  Then as $k\to \infty$ we get limit points
in $\w(c)$ with itinerary $Y^mCU^\infty$.  These points have limit
points with itinerary $Y^\infty$.  Since $Y^\Z$ is the full
itinerary of a point, $y$, in $\C I$ with limit height $\beta+1$, we
see that in this case there is a point, namely $y$, in $L_{\beta+1}$
such that $\pi_1(y)$ is not limit type $0$ or $1$.

Suppose instead now that the full itinerary for $x$ is
$Y^{-\infty}.U^\infty$.  Then, again, since $B_n\to D=Y^\infty$ for
every $m$ there is a $P_M\in \N$ such that for all $N\ge P_m$
$$B_N=B_N'D_NY^m$$ where $D_N\neq Y$ but $|D_N|=|Y|$.  So for each $m$ we
have infinitely many $v_{n,k}$s with full itinerary
$$B_N'D_NY^mU^{N_k}BV^\infty$$  If $Y\neq U$ then we have points in
$\w(c)$ with itinerary $Y^mU^{N_k}BV^\infty$ which converge to limit
points with itinerary $Y^mU^{\infty}$ which in turn converge to
points with itinerary $Y^\infty$.  Again we get a point $y\in \C I$
with limit type $\beta+1$ that has the property that $\pi_1(y)$ does
not have limit type $0$ or $1$ in $\w(c)$.  If instead though we
have $Y=U$ then since $|D_N|=|Y|$ without loss of generality we can
assume that all of the $D_N$s are the same finite word $D$.  By
shifting again we see that there is a sequence of points with full
itinerary $B_N'D.U^{m+N_k}\dots$.  These converge to a point in
$L_{\beta}$ with full itinerary $BD.U^\infty$ where $D\neq U$ and
$|D|=|U|$.  Since this point is in $L_{\beta}$ there is some $Z$
such that $B=Z^{-\infty}E$.  Since $B_N'\to B$, for every $m$ there
is an $P_m\in \N$ such that for $N\ge P_m$, $B_N'=B_N''Z^mE$, where
$B_N''\neq Z^{-\infty}$ and $Z^\Z$ is the full itinerary of a point
in $L_\beta$.  This implies that for every $m$, there are infinitely
many $v_{n,k}$s with full itinerary $$B_N''Z^mEDU^{Q_k}BV^{\infty}$$
where $Q_k\to \infty$.  This implies that in $\w(c)$ there are
infinitely many points with itinerary $Z^mEDU^{Q_k}BV^\infty$ which
limit to points with itinerary $Z^mEDU^\infty$.  These points have
limit type at least $1$ and they limit to points with itinerary
$Z^\infty$ which has limit type at least $2$.  Hence there is a
point, $z$, in $\C I$ with limit type $\beta+1$ with the property
that the limit type of $\pi_1(z)$ is at least $2$.
\end{proof}

\begin{proof}[Proof of \ref{height 2}.]
  Since $\w(c_f)$ has limit height 2, some point in $\C I_f$ with limit type
$\beta+1$ must project into the top level of $\w(c_f)$, Proposition
\ref{proj level2}.  Notice that there is only one point in
$\ilim{f}$ that is fixed by the shift map.  This point has full
itinerary $1^\Z$.  So suppose that $m=2$.  Then we know that the
points in $\C I_f$ with limit type $\bt$ must be period $2$ under
$\sh$.  Thus one of them projects to some $x\in \w(c_f)$ with limit
type $2$.  This point cannot have itinerary $1^\infty$.  Thus $n\neq
1$.  This establishes cases (1.a) and (2.a).

Similarly, there are only two points in $\ilim{f}$ that have period
$2$ under $\sh$.  These points have full itinerary $(10)^\Z$ or
$(01)^\Z$.  Also there are only two points in $[0,1]$ with period
$2$ under $f$, and they have itinerary $(01)^\infty$ or
$(10)^\infty$.  Therefore if $m=4$ then $n\neq 2$.  Also if $m=2$
then $n\neq 4$.  Lastly, if $\beta=1$ then by Lemma \ref{proj} we
see that $m\ge n$.  This establishes the remaining cases of the
theorem.
\end{proof}

\begin{proof}[Proof of \ref{equal height}.]  Since $\C I_g$ and $\w(c_g)$
have the same limit height, $n\le m$ by Lemma \ref{proj}.  This
implies case (3) immediately.  By similar reasoning as above if
$m=2$ then $n\neq 1$, and if $m=4$ then $n\neq 2$.
\end{proof}

This completes the proofs of the theorems needed to prove the main
theorem of the paper, Theorem \ref{main}.

\section{Countable Postcritical $\w$-limit Sets with Uncountable
Inverse Limit Spaces}

We end the paper with a few examples indicating that $\C I_f$ can be
much more complicated than $\w(c_f)$.  We outline a construction of
maps for which $\w(c_f)$ has limit type $2$ but $\C I_f$ is a Cantor
set or a Cantor set with a countable collection isolated points.

Consider the full binary tree.  We will label the vertices (starting
at the $0$th level) left to right following the rule that the $j$th
vertex in the $n$th level is labeled $2^n+j$ for $0\le j<2^n$.  Let
$\C L$ be the set of all infinite walks through the labeled tree
starting at the first vertex.  Notice that $\C L$ has
 uncountably many (actually $2^\w=\mathfrak c$ many) points and with the usual
topology on $\C L$, agreement along initial segments, $\C L$ is a
Cantor set. Let $\Gamma$ be the set $\{(\dots \gamma_{2}, \gamma_{1},
\gamma_0): (\gamma_0, \gamma_1, \gamma_2\dots)\in \C L\}$.  Clearly
$\Gamma$ is also a Cantor set.

 Let $B_i=101^i$ for all $i\nin\{2^n\}_{n\in \N}$ and
let $B_{2^n}=1^{2^n}$ for $n\in \N$.  Let $A=10001$, and let
$(n_i)_{i\in \N}$ be an increasing sequence of positive integers.
Let
$$K=AA1^{n_1}B_11^{n_2}B_2B_11^{n_3}B_3 B_1 1^{n^4} B_4 B_2
B_11^{n_5}B_5B_2B_11^{n_6}B_6B_3B_1\dots$$ where the indices on
the strings of $B_j$'s follow
longer and longer labeled paths backwards through the full binary tree.  Clearly $K$ is the kneading sequence of a tent map $f$ with critical point $c$.

\begin{theorem}
  The point $x\in\ilim{f}$ is in  $\C I_f$ if, and only if, the full
itinerary of $x$ is a shift of one of the following bi-infinite
sequences:
  \begin{enumerate}
  \item $1^\Z$
  \item $B_{\gamma}. 1^{\infty}$ for some $\gamma\in \Gamma$.
  \item $1^{-\infty}.01^\infty$
\end{enumerate}
Moreover, $\C I_f$ is a Cantor set and $\w(c)\simeq\w^2+1$.
\end{theorem}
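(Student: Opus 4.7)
The plan is to use the bi-infinite analogue of Theorem~\ref{omegaident}: a point $x\in\ilim f$ lies in $\C I_f$ if and only if every finite central subword of its full itinerary $\Fi_f(x)$ occurs infinitely often in the kneading sequence $K$. So the task reduces to identifying the words appearing infinitely often in $K$, and then determining which bi-infinite sequences have all their central windows of that form.

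First I would make some combinatorial observations about $K$. A run of three or more $0$'s appears in $K$ only inside the opening $AA=1000110001$; elsewhere all $0$'s come from the leading $10$ of some $B_i$ with $i$ not a power of $2$, so they are isolated by $1$'s. Moreover, the block concatenations that appear infinitely often in $K$ are precisely the reversed initial segments $B_{\gamma_k}\cdots B_{\gamma_1}$ of walks in $\C L$, because the walks used to index $K$ get arbitrarily long and every finite such walk has infinitely many extensions in $\C L$. Separating bi-infinite sequences by the total number of $0$'s they contain, I would then conclude: no $0$'s forces $1^\Z$; exactly one $0$ forces a shift of $1^{-\infty}.01^\infty$; infinitely many $0$'s force a shift of $B_\gamma.1^\infty$ for some $\gamma\in\Gamma$. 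A key sub-step is ruling out sequences with finitely many but more than one $0$, which follows from the observation that in $\C L$, once a walk deviates from the leftmost spine $(1,2,4,8,\ldots)$, no subsequent vertex can be a power of $2$, so any $0$ in the past of a candidate itinerary forces infinitely many further $0$'s to the left.

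To finish, I would read off the two topological statements. Projecting type~(1), (2), (3) to right-infinite itineraries gives $1^\infty$; the sequences $1^k01^\infty$ for $k\ge 0$ (each isolated in $\w(c)$); and the tails $W1^\infty$ where $W$ is a suffix of some $B_{\gamma_k}\cdots B_{\gamma_1}$. The $1^k01^\infty$ converge to $1^\infty$; the tails with $B$-blocks are themselves limits of $1^k01^\infty$'s, and as the number of blocks grows the tails limit down to $1^\infty$. Hence $\w(c)$ is a countable compact scattered space whose unique point of scattered height $2$ is $1^\infty$, giving $\w(c)\simeq \w^2+1$. For $\C I_f$, the map $\gamma\mapsto B_\gamma.1^\infty$ is a homeomorphic embedding of the Cantor set $\Gamma$ into $\C I_f$; the countably many shifts of $1^\Z$ and of $1^{-\infty}.01^\infty$ are all accumulated by type~(2) points (the leftmost spine $(1,2,4,\ldots)$ gives $1^\Z$ itself, and inserting a long initial segment of the spine before a deviation $\gamma_{M+1}=2^{M+1}+1$ approximates any shift of the single-$0$ sequence as $M\to\infty$). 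Total disconnectedness and compactness of $\C I_f$ are automatic from its symbolic coding and from its being $\invlim{\w(c)}{f|_{\w(c)}}$, so perfectness completes the identification as a Cantor set. The main obstacle is the combinatorial case analysis above, especially excluding bi-infinite sequences with a finite number $\ge 2$ of $0$'s, which relies crucially on the non-return property of the binary tree together with the fact that the tree paths indexing $K$ are extended monotonically.
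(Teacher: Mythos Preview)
Your approach to the characterization of $\C I_f$ is essentially the same as the paper's: both identify the full itineraries via the subwords occurring infinitely often in $K$, and both hinge on the fact that once two $0$'s appear the surrounding block structure is forced to follow a path in the binary tree. Your case split by the number of $0$'s is a reasonable way to organize this, and the non-return property of the spine is exactly the mechanism the paper uses (implicitly) to force the left tail to be an infinite path rather than eventually constant $1$'s.

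However, your analysis of $\w(c)$ has the limit structure inverted. You claim that the points $1^k01^\infty$ are isolated and that the $B$-block tails $B_{\gamma_n}\cdots B_11^\infty$ are limits of them. The opposite is true. A tail with at least two $0$'s, say beginning $01^{p+1}0\ldots$, pins down a specific vertex $p$ (and hence the unique path from $p$ to the root); every occurrence of this pattern in $K$ is followed by the same finite string and then $1^{n_i}$ with $n_i\to\infty$, so such a tail is isolated. Conversely, $1^k01^\infty$ is \emph{not} isolated: for each large $p$ with non-power-of-$2$ parent, the itinerary $1^k01^{p+1}01^\infty$ lies in $\w(c)$ and converges to $1^k01^\infty$ as $p\to\infty$. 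So the $1^k01^\infty$ are the limit-type-$1$ points, the $B$-block tails are the isolated points, and $1^\infty$ is the unique limit-type-$2$ point. Your further claim that ``as the number of blocks grows the tails limit down to $1^\infty$'' is also false as stated: every nondegenerate tail $B_{\gamma_n}\cdots B_11^\infty$ begins with $10$, so no subsequence of them can converge to $1^\infty$. You reach the correct conclusion $\w(c)\simeq\w^2+1$, but the supporting argument does not establish it.
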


\begin{proof}

Clearly, all such full itineraries are admissible and correspond to
points in $\C I_f$.  
Now suppose that $x\in \C I_f$.  If the full itinerary of $x$ has
fewer than two $0$s, then it must satisfy either case (1) or (3).
Suppose then that it has two $0$s.  Since $00$ and $010$ do not
occur infinitely often in $K$, the full itinerary of $x$ is
$$\sh^r(...1^k01^m.01^j...)$$ for some $m\ge 2$ and $k,j\ge 1$ and
$r\ge 0$.  The only way to obtain $01^m0$ from $K$ is for this to be
of the form
$$\sh^r(...1^{k-1}B_{m-1}.B_t...)$$ where $m-1, t$ is an allowed
path up the binary tree.  By construction of $\Gamma$ we know in
fact that we have some $\gamma\in \Gamma$ which has as its terminal
segment $\gamma_n, \gamma_{n-1}, \dots \gamma_1$ where
$\gamma_n=m-1$, $\gamma_{n-1}=t$ and $\gamma_1=1$.  So we see that
in fact the full itinerary of $x$ is  $$
\sh^r(...1^{k-1}B_{\gamma_{n}}.B_{\gamma_{n-1}}\dots B_1 ...)$$
Since $n_i\to \infty$ and $B_1$ is always followed by some $1^{n_i}$
we see that the full itinerary of $x$ is
$$\sh^r(...1^{k-1}B_{\gamma_{n}}.B_{\gamma_{n-1}}\dots B_1
1^\infty)$$  It also follows that the string of $B_i$s that can
precede $B_{\gamma_n}$ in $K$ will form an infinite path through the
labeled tree.  Hence if the full itinerary of $x$ contains two $0$s
then a shift it must satisfy Case (2).

To see that $\C I_f$ is a Cantor set, we will show that it is a
compact set of countably many sub-Cantor sets. Let $C_0\subseteq \C
I_f$ be the set of points in $\C I_f$ with full itinerary of the
form
$$B_\gamma .1^\infty$$ for some $\gamma\in \Gamma$.  This set
is clearly homeomorphic with $\Gamma$ which is a Cantor set.  Hence
for each $r\in \Z$ $C_r=\sh^r(C_0)$ is also a Cantor set.  Thus $\C
I_f$ contains countably many Cantor sets. As $r\to \infty$, $C_r\to
z$ with  full itinerary $1^{-\infty}$ and as $r\to -\infty$ the
limit points of $C_r$ have full itinerary
$\sh^t(1^{-\infty}0.1^\infty)$ for some $t\in \Z$.  Hence
$$\bigcup_{r\in \Z}C_r \cup\{z_t\}_{t\in
\N}\cup\{z\}=\C I_f$$ is a Cantor set.

Next we show that $\w(c)$ is homeomorphic to $\w^2+1$.  Notice that
by the above argument and Cases (1)--(3) on the set of possible full
itineraries, we know that $x\in \w(c)$ if, and only if its itinerary
is a shift of one of the following: \begin{enumerate}
  \item $1^\infty$
  \item $B_{\gamma_n}B_{\gamma_{n-1}}\dots B_1 1^\infty$ where $\gamma_n, \gamma_{n-1}\dots\gamma _1$ is a terminal segment of some $\gamma\in \Gamma$.
  \item $1^k01^\infty$ for some $k\in \N$.
    \end{enumerate}
Suppose that $x\in \w(c)$ and its itinerary satisfies Case (2), say
$$\sh^r(B_{\gamma_n}B_{\gamma_{n-1}}\dots B_1 1^\infty)$$ where
$r<|B_{\gamma_n}|$ and its itinerary is not degenerately of Case
(3), i.e. $n>2$ and $\gamma_n\neq 2^t$  and $\gamma_n\neq 2^t+1$ for
any $t\ge 0$. Every time $B_{\gamma_n}B_{\gamma_{n-1}}$ occurs in
$K$ it is followed by $B_{\gamma_{n-2}}B_{\gamma_{n-3}}\dots B_1
1^M$ with $M$ increasing.  Therefore, there is a unique point in
$\w(c)$ which has $\sh^r(B_{\gamma_n}B_{\gamma_{n-1}}\dots B_1)$ as
an initial segment of its itinerary.  It follows then that $x$ is
isolated. Clearly, every point $y_t\in \w(c)$ with itinerary
$1^t01^\infty$ is a limit of some sequence of isolated points of
Case (2).  These all converge to a point $y\in \w(c)$ with itinerary
$1^\infty$.

\end{proof}

Notice that we do not need to use the entire binary tree in the
above construction.  By careful choice of subsets of $\Gamma$ we can
follow a modified version of our technique and force the points
$z_t\in \C I_f$ to have any limit type while the points $y_k\in
\w(c)$ have limit type 1.  As long as our subset of $\Gamma$ has an
infinite labeled path other than $(1,2, 2^2, 2^3\dots 2^n\dots)$ the
points $y_k$ will have limit type 1 and so the point $y$ will be
limit type 2.

\bibliographystyle{plain}

\bibliography{nsfgrant}

\end{document}